  \newtheorem{theorem}{Theorem}
  \newtheorem{lemma}{Lemma}
  \newtheorem{cor}{Corollary}
  \newtheorem{remark}{Remark}
  \newtheorem{example}{Example}
  \newtheorem{definition}{Definition}
  \numberwithin{equation}{section}
\begin{document}

\title[Generalizations of the Choe-Hoppe helicoid]{Generalizations of the 
Choe-Hoppe helicoid and Clifford cones in Euclidean space} 

\bigskip 

 \author{Eunjoo Lee}
\address{Korea Institute for Advanced Study School of Mathematics, Hoegiro 85, Dongdaemun-gu, Seoul 130-722, Korea}
\email{eunjoolee@kias.re.kr}

\author{Hojoo Lee}
\address{Korea Institute for Advanced Study School of Mathematics, Hoegiro 85, Dongdaemun-gu, Seoul 130-722, Korea}
\email{momentmaplee@gmail.com, autumn@kias.re.kr}
 

\begin{abstract}
By sweeping out $L$ indpendent Clifford cones in ${\mathbb{R}}^{2N+2}$ via the multi-screw motion, 
we construct minimal submanifolds in  ${\mathbb{R}}^{L(2N+2)+1}$. Also, we sweep out the $L$-rays 
Clifford cone (introduced in Section \ref{section multi rays}) in ${\mathbb{R}}^{L(2N+2)}$ to construct minimal submanifolds  in ${\mathbb{R}}^{L(2N+2)+1}$. Our minimal submanifolds unify various interesting examples: Choe-Hoppe's helicoid of codimension one, cone over Lawson's ruled minimal surfaces in ${\mathbb{S}}^{3}$, Barbosa-Dajczer-Jorge's ruled submanifolds, and Harvey-Lawson's volume-minimizing twisted normal cone over the Clifford torus $ \frac{1}{\sqrt{2}}  {\mathbb{S}}^{N} \times   \frac{1}{\sqrt{2}}  {\mathbb{S}}^{N}$.   
\end{abstract}

\thanks{
 {\it Keywords:} Clifford cones, Clifford tori, helicoids, minimal submanifolds}

\maketitle

  \begin{center}
  {\small{\textit{Dedicated to Professor Jaigyoung Choe in honor of his $61$st birthday}}}
  \end{center}

\bigskip
\bigskip

  \tableofcontents
 

 \section{Helicoids and minimal cones}

In 1867, Riemann \cite{Riemann}  discovered a family of complete, embedded, singly periodic 
minimal surfaces in Euclidean space  ${\mathbb{R}}^{3}$ foliated by circles and lines. He established 
that his staircases, planes, catenoids, and helicoids  are the only minimal surfaces fibered by circles or 
lines in parallel planes. 

Catenoids can be generalized to higher dimensions and have been characterized in various ways. 
Higher dimensional catenoids in ${\mathbb{R}}^{K \geq 4}$ are the minimal hypersurfaces spanned by 
a family of coaxial $(K-2)$-dimensional round spheres of varying radii. 
In 1991, Jagy \cite{Jagy1991}  adopted Schoen's argument \cite{Schoen1983} to show that if a minimal 
hypersurface ${\Sigma}^{K-1}$ in  ${\mathbb{R}}^{K \geq 4}$ is foliated by $(K-2)$-dimensional round 
spheres of varying radii in parallel $(K-1)$-dimensional hyperplanes, then the submanifold ${\Sigma}^{K-1}$ should be
rotationally symmetric. See also Shiffman's Theorem \cite[Theorem 1]{Shiffman1956}.

While there were several interesting results on higher dimensional catenoids, significant results on helicoids have been mostly in  ${\mathbb{R}}^{3}$.
For instance, Colding-Minicozzi's deep description illustrates that 
embedded minimal disks in a ball in  ${\mathbb{R}}^{3}$ are modeled on planes or helicoids  \cite{CM01, CM02, CM03, CM04}.
Meeks and Rosenberg \cite{MR2005} proved that helicoids and planes are the only complete, properly embedded, simply connected minimal surfaces.  
Bernstein and Breiner \cite{BC2001} used the Colding-Minicozzi theory to show that a  complete, properly embedded, minimal surface with finite genus and one end must be asymptotic to a helicoid. 

 Helicoids in  ${\mathbb{R}}^{3}$ can be characterized in various ways. 
 For instance, we can obtain helicoids by taking conjugate surfaces of catenoids. However, the notion of conjugation (for instance, see \cite{Eschenburg2006, KGB2001}) is not known for minimal hypersurfaces in  ${\mathbb{R}}^{K \geq 4}$.
 Also, Catalan's Theorem shows that a ruled minimal surface in ${\mathbb{R}}^{3}$ should be a helicoid 
\[
 {\mathcal{H}}_{\left({\lambda}_{0},  {\lambda}_{1}\right)} =  \left\{ \; 
 \begin{bmatrix} \; \mathbf{r} \; e^{ \, i  \left( {\lambda}_{1}  \Theta \right) \, }  \\  {\lambda}_{0}  \Theta  \end{bmatrix} \in 
{\mathbb{C}} \times \mathbb{R} \; \vert \;  \Theta, \mathbf{r}  \in \mathbb{R}
\right\},  \;\; {\lambda}_{0},  {\lambda}_{1} \in \mathbb{R} \text{: constants}.
\]
The helicoid ${\mathcal{H}}_{\left({\lambda}_{0},  {\lambda}_{1}\right)}$ in  ${\mathbb{R}}^{3}$ is invariant under the screw motion 
\[
\begin{bmatrix} \;   x+iy\; \\  z   \end{bmatrix} \in 
{\mathbb{C}} \times \mathbb{R}  \; \mapsto \;
\begin{bmatrix} \;   e^{ \, i  \left( {\lambda}_{1}  t \right) \, } (x+iy) \;  \\  z+ {\lambda}_{0} t   \end{bmatrix} \in  {\mathbb{C}} \times \mathbb{R}. 
\]
This geometric observation gives an insight on generalizing classical helicoids into higher dimensions as in  \cite{CH2013}. 

Choe and Hoppe \cite[Theorem 2]{CH2013} gave an explicit construction of a minimal hypersurface in   ${\mathbb{R}}^{2N+1}$  foliated by Clifford hypercones  in ${\mathbb{R}}^{2N}$. 
The Choe-Hoppe helicoid in  ${\mathbb{R}}^{2N+1}$ is the hypersurface 
\[
 z= f \left( x_{1}, y_{1}, \cdots, x_{N}, y_{N} \right) = \mathbf{arg} \, \left(  \sqrt{  \; (x_{1}+i y_{1})^{2}  + \cdots + 
  (x_{N}+i y_{N})^{2}  \;  } \;  \right),
\]
up to homotheties. Recently, Del Pino, Musso, and Pacard \cite{PMP2012} produced new solutions of the Allen-Cahn equation whose zero set is the Choe-Hoppe helicoid. 
See also Wei-Yang's traveling wave solutions with vortex helix structure for Schr\"{o}dinger map equation 
\cite{WY2013}.

Minimal cones  play an important role in solving higher dimensional Bernstein problems (for instance, see Fleming's argument \cite{Fleming1962}  and Simons' Theorem\cite[Theorem 6.2.2]{Simons1968}) and understanding 
the nature of singularities of minimal  varieties.  Smale \cite{Smale1989} used  disjoint stable minimal hypercones in ${\mathbb{R}}^{K \geq 8}$ to construct many stable embedded minimal hypersurfaces with boundary, in ${\mathbb{R}}^{K \geq 8}$, with an arbitrary number of isolated singularities and 
prescribed rate of decay to their tangent cones at the singularities. Moreover,  minimal cones in the unit ball become important examples of free boundary minimal varieties. See papers \cite{FR2011, FR2012} by Fraser and Schoen, and the survey \cite[Example 2.10]{Schoen2013} by Schoen.

Our main goal is to generalize Choe-Hoppe's minimal variety. By sweeping out 
Clifford cones or  multi-rays Clifford cones (Definition \ref{GCC}), we explicitly construct 
 generalized helicoids in 
odd dimensional Euclidean spaces  (Theorem \ref{gHEL1} and Theorem \ref{gHEL2}) and new 
minimal cones in even dimensional Euclidean spaces  (Corollary \ref{minimal multi-lays clifford cones}, 
Remark \ref{CONE by THM 2}, and Example \ref{E3}). We also extend Takahashi's classical criterion to higher 
codimension (Theorem \ref{ET}). We shall show that our minimal submanifolds naturally unify various  
minimal submanifolds in Euclidean space and 
unit sphere. See four examples illustrated in Section \ref{MAINresults}.

 \section{Preliminaries}     \label{Pre}

\subsection{Multi-screw motions in Euclidean space}

For a given angle $\theta$ and a complex vector
\[
X + i  Y =   \begin{bmatrix} x_{1} + i y_{1} \\ \vdots \\  x_{N+1} + i y_{N+1} \end{bmatrix}  \in  {\mathbb{C}}^{N+1} = {\mathbb{R}}^{N+1} + i{\mathbb{R}}^{N+1},
\]
we adopt the notation  
\[
e^{i( \lambda_{i} \theta )} \left( X+ i  Y  \right) = \begin{bmatrix} e^{i( \lambda_{i} \theta )}  \left( x_{1} + i y_{1} \right) \\ \vdots \\  e^{i( \lambda_{i} \theta )}  \left( x_{N+1} + i y_{N+1} \right) \end{bmatrix}.
\] 
We also use the complex structure $\mathbf{J}$ as the $\frac{\pi}{2}$-rotation. More explicitly,  
\[
\mathbf{J} \left( X+ i  Y  \right) = i \left( X+ i  Y  \right)  = \begin{bmatrix}  - y_{1} + i x_{1}   \\ \vdots \\    -y_{N+1} + i x_{N+1}  \end{bmatrix}.
\] 
We then introduce the multi-screw motion in ${\mathbb{R}}^{L(2N+2)+1}$.

\begin{definition}[\textbf{Multi-screw motion in Euclidean space ${\mathbb{R}}^{L(2N+2)+1}$}]  \label{MSM}
Let $L \geq 1$ and $N \geq 0$ be integers. Given an $(L+1)$-tuple of real numbers 
\[
{\Lambda}= \left( {\lambda}_{0}, {\lambda}_{1}, \cdots, {\lambda}_{L} \right),
\]
we introduce the multi-screw motion ${\mathcal{S}}_{{\Lambda}}$ in ${\mathbb{R}}^{L(2N+2)+1} = {\mathbb{C}}^{(N+1)L} \times {\mathbb{R}}$ with the pitch vector ${\Lambda}$. The mapping ${\mathcal{S}}_{{\Lambda}}$ is defined by 
\begin{equation} \label{HEL}
\begin{bmatrix}
{X_{1}} + i  {Y_{1}}  \\ 
\vdots \\
X_{L}   + i  {Y_{L}} \\ 
z
\end{bmatrix}
\mapsto
\begin{bmatrix}
 e^{i( \lambda_{1} \theta )} \left( {X_{1}} + i  {Y_{1}}   \right) \\ 
\vdots \\
 e^{i( \lambda_{L} \theta )} \left( {X_{L}} + i  {Y_{L}}   \right) \\ 
z +  {\lambda_{0}} \theta
\end{bmatrix}.
\end{equation}

\end{definition}

\subsection{Identities on higher dimensional Clifford tori}

\begin{definition}[\textbf{Higher dimensional Clifford tori in sphere ${\mathbb{S}}^{2N+1}$}] 
A $2N$-dimensional Clifford torus $\mathcal{C} = \frac{1}{\sqrt{2}}  {\mathbb{S}}^{N} \times   \frac{1}{\sqrt{2}}  {\mathbb{S}}^{N} $ denotes the minimal hypersurface in ${\mathbb{S}}^{2N+1} \subset {\mathbb{R}}^{2N+2}$ defined by
\[
 \mathcal{C} =\left\{ \frac{1}{\sqrt{2}} \begin{bmatrix} X \\ Y
\end{bmatrix}  \in {\mathbb{S}}^{2N+1} \subset  {\mathbb{R}}^{2(N+1)} \; \vert \;  {\Vert X \Vert}_{  {\mathbb{R}}^{N+1} } =1, \; {\Vert Y \Vert}_{  {\mathbb{R}}^{N+1} } =1 \right\} .
\]
 \end{definition}

Throughout this article, the symbol $\;\cdot\;$ means the standard dot product in Euclidean spaces.

\begin{lemma}[\textbf{Identities on  Clifford tori $\frac{1}{\sqrt{2}}  {\mathbb{S}}^{N} \times   \frac{1}{\sqrt{2}}  {\mathbb{S}}^{N}$}]
\label{magic}
Let $\mathcal{C}$ denote a local parameterization of the Clifford torus $\frac{1}{\sqrt{2}}  {\mathbb{S}}^{N} \times   \frac{1}{\sqrt{2}}  {\mathbb{S}}^{N} $  in ${\mathbb{S}}^{2N+1} \subset {\mathbb{R}}^{2N+2}$
\[
   \mathcal{C} \left( u_{1}, \dots, u_{2N} \right) = \frac{1}{\sqrt{2}} \begin{bmatrix} X \left( u_{1}, \dots, u_{N} \right) \\ Y
\left( u_{N+1}, \dots, u_{2N} \right) \end{bmatrix} \in {\mathbb{R}}^{2N+2}, 
\]
where $X \left( u_{1}, \dots, u_{N} \right)$ and $Y \left( u_{N+1}, \dots, u_{2N} \right)$ are an ${\mathbb{R}}^{N+1}$-valued local chart of two independent unit spheres of dimension $N$. Its unit normal vector reads 
\[
   \mathcal{D} \left( u_{1}, \dots, u_{2N} \right) = \frac{1}{\sqrt{2}} \begin{bmatrix} X \left( u_{1}, \dots, u_{N} \right) \\  - Y
\left( u_{N+1}, \dots, u_{2N} \right) \end{bmatrix}.
\] 
Let $\left(  {\mathbf{g}}^{ij}  \right)_{ 1 \leq i,j \leq 2N }$ denote the inverse of matrix  $ \left(  {\mathbf{g}}_{ij}  \right)_{1 \leq i,j \leq 2N}$ of the first fundamental form induced by the immersion $\mathcal{C}$ in coordinates $u_{1}, \dots, u_{2N}$. We also use the abbreviation ${\mathbf{g}} = \mathrm{det} \, \left(  {\mathbf{g}}_{ij}  \right)$ and introduce the $\mathbb{R}$-valued function
\[
      w_{j} =  \frac{\partial  \mathcal{C}}{\partial u_j} \cdot \mathbf{J} \mathcal{C}, \quad j \in \left\{1, \cdots, 2N \right\}.
\]
Then, we have the following identities:
 
\begin{enumerate}
\item[\textbf{(a)}] 
\[
\mathbf{J} \mathcal{C} =  \left(\mathcal{D} \cdot \mathbf{J}  \mathcal{C} \right) \; \mathcal{D} + 
\sum_{1 \leq i,j \leq 2N} g^{ij} w_{j} \frac{\partial \mathcal{C}}{\partial u_{i}}, 
\] 
and
\[
- \mathbf{J} \mathcal{D} =  \left(\mathcal{D} \cdot \mathbf{J}  \mathcal{C} \right) \; \mathcal{C} + 
\sum_{1 \leq i,j \leq 2N} g^{ij} w_{j} \frac{\partial \mathcal{D}}{\partial u_{i}}. 
\]
\item[\textbf{(b)}]
\[
1 - \left(  {  \mathcal{D} \cdot \mathbf{J}  \mathcal{C}     }   \right)^{2} = \sum_{1 \leq i,j \leq 2N}  {\mathbf{g}}^{ij} w_{i} w_{j}. 
\]
\item[\textbf{(c)}]
\[
 \sum_{1 \leq i,j \leq 2N}  \frac{\partial}{\partial u_{i}} \left(   \sqrt{  \mathbf{g} \, } \;   {\mathbf{g}}^{ij} w_{j} \right) = 0.
\]
\item[\textbf{(d)}]
\[
\sum_{1 \leq i,j \leq 2N}   {\mathbf{g}}^{ij}  w_{j}  \frac{\partial }{\partial u_{i}}  \left(  {  \mathcal{D} \cdot \mathbf{J}  \mathcal{C}     }   \right) = 0.
\]
\item[\textbf{(e)}]
\[
\sum_{1 \leq i,j \leq 2N}   {\mathbf{g}}^{ij}     \frac{\partial }{\partial u_{i}}  \left(  {  \mathcal{D} \cdot \mathbf{J}  \mathcal{C}     }   \right) \frac{\partial \mathcal{C}}{\partial u_{j}} =  - 2 \left(\, \mathbf{J} \mathcal{D} +  \left(\mathcal{D} \cdot \mathbf{J}  \mathcal{C} \right) \, \mathcal{C} \, \right).
\]
\end{enumerate}
\end{lemma}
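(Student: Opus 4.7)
The plan is to exploit an orthogonal frame adapted to the Clifford torus and to reduce every item to elementary facts on the two sphere factors. Writing $\mathcal{C} = \frac{1}{\sqrt{2}}(X + iY)$ gives $\mathbf{J}\mathcal{C} = \frac{1}{\sqrt{2}}(-Y + iX)$, $\mathbf{J}\mathcal{D} = \frac{1}{\sqrt{2}}(Y + iX)$, and the useful scalar $\mathcal{D}\cdot\mathbf{J}\mathcal{C} = -\,X\cdot Y$. Because $\partial\mathcal{C}/\partial u_i$ touches only the $X$-slot for $i\leq N$ and only the $Y$-slot for $i>N$, the Gram matrix $\mathbf{g}_{ij}$ is block diagonal with blocks $\tfrac{1}{2}g_{X}$, $\tfrac{1}{2}g_{Y}$, the rescaled round metrics on the two copies of $\mathbb{S}^N$. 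Moreover $\partial\mathcal{D}/\partial u_i = \epsilon_i\,\partial\mathcal{C}/\partial u_i$ with $\epsilon_i=+1$ for $i\leq N$ and $\epsilon_i=-1$ for $i>N$, so both $\{\mathcal{C},\mathcal{D},\partial_i\mathcal{C}\}$ and $\{\mathcal{C},\mathcal{D},\partial_i\mathcal{D}\}$ are orthogonal bases of $\mathbb{R}^{2N+2}$ that induce the same tangent metric $\mathbf{g}_{ij}$.

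For \textbf{(a)}, I would decompose $\mathbf{J}\mathcal{C}$ (respectively $-\mathbf{J}\mathcal{D}$) in the first (respectively second) basis. The $\mathcal{C}$- and $\mathcal{D}$-coefficients follow from the antisymmetry $u\cdot\mathbf{J}v = -v\cdot\mathbf{J}u$, while the tangent coefficients come from inverting the Gram matrix against the inner products $w_j = \partial_j\mathcal{C}\cdot\mathbf{J}\mathcal{C}$; a short direct check gives $\partial_j\mathcal{D}\cdot(-\mathbf{J}\mathcal{D}) = w_j$ as well. Part \textbf{(b)} then drops out by taking the squared Euclidean norm of the first identity of (a): cross terms vanish by orthogonality, and $|\mathbf{J}\mathcal{C}|^2 = |\mathcal{C}|^2 = 1$. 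For \textbf{(c)}, the factorization $\sqrt{\mathbf{g}} = 2^{-N}\sqrt{\det g_X\det g_Y}$ together with block diagonality splits the sum into two pieces, each of which is (up to a constant depending on the opposite factor) the spherical divergence of the position vector on one of the $\mathbb{S}^N$ factors; the eigenfunction identity $\Delta_{\mathbb{S}^N}X = -NX$ turns both pieces into $\pm N(X\cdot Y)\sqrt{\det g_X\det g_Y}/2^N$ with opposite signs, and they cancel.

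Parts \textbf{(d)} and \textbf{(e)} rest on the key calculation
\[
\frac{\partial}{\partial u_i}\bigl(\mathcal{D}\cdot\mathbf{J}\mathcal{C}\bigr) = \frac{\partial\mathcal{D}}{\partial u_i}\cdot\mathbf{J}\mathcal{C} + \mathcal{D}\cdot\mathbf{J}\frac{\partial\mathcal{C}}{\partial u_i} = 2\,\epsilon_i\, w_i,
\]
immediate from $\partial_i\mathcal{D} = \epsilon_i\partial_i\mathcal{C}$ and antisymmetry of $\mathbf{J}$. Substituted into (d), this yields $2(A-B)$ with $A = \sum_{i,j\leq N}\mathbf{g}^{ij}w_iw_j$ and $B = \sum_{i,j>N}\mathbf{g}^{ij}w_iw_j$; both equal $\tfrac{1}{2}\bigl(1-(X\cdot Y)^2\bigr)$, being the squared norms of the orthogonal projection of $Y$ onto $T_X\mathbb{S}^N$ and of $X$ onto $T_Y\mathbb{S}^N$ respectively, so their difference vanishes. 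Substituted into (e), the same identity produces $2\bigl(V^{(1)}-V^{(2)}\bigr)$, where $V^{(k)}$ is the partial tangential projection of $\mathbf{J}\mathcal{C}$ onto the $k$-th sphere factor; by the second identity of (a) this difference is exactly $-\mathbf{J}\mathcal{D} - (\mathcal{D}\cdot\mathbf{J}\mathcal{C})\mathcal{C}$.

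The hard part will not be any single step but keeping the $\epsilon_i$-sign bookkeeping uniform across the two blocks $\{i\leq N\}$ and $\{i>N\}$ through all five identities. Once that discipline is set, every item of the lemma collapses to an orthogonal-frame decomposition, a squared norm, or the single spherical fact $\Delta_{\mathbb{S}^N}X = -NX$.
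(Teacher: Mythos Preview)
Your proposal is correct, and for parts \textbf{(a)}, \textbf{(b)}, and \textbf{(e)} it is essentially identical to the paper's argument: the same orthogonal-frame decomposition for (a), the same Pythagorean step for (b), and the same key identity $\partial_i(\mathcal{D}\cdot\mathbf{J}\mathcal{C}) = 2\epsilon_i w_i$ combined with the second identity of (a) for (e).

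For \textbf{(c)} and \textbf{(d)} your organization differs from the paper's, though both routes are valid. In (c) the paper applies the product rule to $\partial_i(\sqrt{\mathbf{g}}\,\mathbf{g}^{ij}\,\partial_j\mathcal{C}\cdot\mathbf{J}\mathcal{C})$: the first resulting term is $\sqrt{\mathbf{g}}\,(\Delta_g\mathcal{C})\cdot\mathbf{J}\mathcal{C}$, which vanishes because $\Delta_g\mathcal{C}=-2N\mathcal{C}\perp\mathbf{J}\mathcal{C}$, and the second term $\sum\sqrt{\mathbf{g}}\,\mathbf{g}^{ij}\,\partial_j\mathcal{C}\cdot\mathbf{J}\partial_i\mathcal{C}$ vanishes by the antisymmetry $u\cdot\mathbf{J}v=-v\cdot\mathbf{J}u$. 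Your block-wise computation using $\Delta_{\mathbb{S}^N}X=-NX$ on each factor and cancelling the $\pm N(X\cdot Y)$ contributions reaches the same conclusion but trades the antisymmetry trick for the explicit product structure. In (d) the paper avoids your explicit $A=B$ computation: it rewrites the sum, via (a), as $-2\bigl[\mathbf{J}\mathcal{C}-(\mathcal{D}\cdot\mathbf{J}\mathcal{C})\mathcal{D}\bigr]\cdot\mathbf{J}\mathcal{D} = -2\,\mathcal{C}\cdot\mathcal{D}=0$. The paper's versions of (c) and (d) are slightly more intrinsic (they do not open the block structure), while yours are more hands-on; neither is harder, and your $\epsilon_i$ bookkeeping is exactly the right device to keep everything consistent.
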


\begin{proof} From the definition of the immersion $\mathcal{C}$, we observe that the symmetric matrix $\left(  {\mathbf{g}}_{ij}  \right)_{1 \leq i,j \leq 2N}$ becomes a block matrix  such that ${\mathbf{g}}_{ij}={\mathbf{g}}_{ji}=0$  holds whenever $i \leq N$ and $j \geq N+1$.

\textbf{(a)} Since the second identity is equivalent to the first one, we only check the first one. Fix the coordinates $\left(u_{1}, \cdots, u_{2N}\right)$.  Since vectors $X$, $\frac{\partial X}{\partial{u_{1}}}$,  $\cdots$, $\frac{\partial X}{\partial{u_{N}}}$ in 
${\mathbb{R}}^{N+1}$ are linearly independent, we have the linear combination
\[
 - Y = - \left( Y \cdot X \right) X + \sum_{i=1}^{N} \tau_{i} \frac{\partial X}{\partial{u_{i}}},
\]
and similarly, 
\[
 \; \, X = \; \,  \left( X \cdot Y \right) Y + \sum_{i=N+1}^{2N} \tau_{i} \frac{\partial Y}{\partial{u_{i}}},
\]
for some coefficients ${\tau}_{1}$, $\cdots$, ${\tau}_{2N}$.
Combining these two, we have the linear combination  
\[
\mathbf{J} \mathcal{C} =  \frac{1}{\sqrt{2}} \begin{bmatrix} -Y  \\ X  \end{bmatrix} = \left(\mathcal{D} \cdot \mathbf{J}  \mathcal{C} \right) \; \mathcal{D} + 
\sum_{i=1}^{2N} \tau_{i} \frac{\partial \mathcal{C}}{\partial u_{i}}. 
\]
Taking the dot product with the vector $\frac{\partial \mathcal{C}}{\partial u_{j}}$ in both sides yields
\[
 w_{j} =  \frac{\partial  \mathcal{C}}{\partial u_j} \cdot \mathbf{J} \mathcal{C} = \sum_{i=1}^{2N}  {\mathbf{g}}_{ji}  \tau_{i}, \quad j \in \left\{1, \cdots, 2N \right\},
\] 
which implies that 
\[
 {\tau}_{i} = \sum_{j=1}^{2N}  {\mathbf{g}}^{ij} w_{j}, \quad i \in \left\{1, \cdots, 2N \right\}.
\]
\textbf{(b)} It follows from $\textbf{(a)}$ and $ \mathcal{C} \cdot  \mathcal{C}=1$ that  
\begin{eqnarray*}
1 - \left(  {  \mathcal{D} \cdot \mathbf{J}  \mathcal{C}     }   \right)^{2}  &=&
 \mathbf{J}  \mathcal{C}  \cdot \left[ \, \mathbf{J} \mathcal{C} -  \left(\mathcal{D} \cdot \mathbf{J}  \mathcal{C} \right)  \mathcal{D}  \, \right]  \\
 &=& \mathbf{J}  \mathcal{C}  \cdot \left[  \, \sum_{1 \leq i,j \leq 2N} g^{ij} w_{j}  \frac{\partial \mathcal{C}}{\partial u_{i}}  \, \right] \\
 &=&  \sum_{1 \leq i,j \leq 2N}  {\mathbf{g}}^{ij} w_{j} w_{i}. 
\end{eqnarray*}
\textbf{(c)} We begin with the decomposition 
\begin{eqnarray*}
&&  \sum_{1 \leq i,j \leq 2N}  \frac{\partial}{\partial u_{i}} \left(   \sqrt{  \mathbf{g} \, } \;   {\mathbf{g}}^{ij} w_{j} \right) \\
&=& \sum_{1 \leq i,j \leq 2N}  \frac{\partial}{\partial u_{i}} \left[   \sqrt{  \mathbf{g} \, } \;   {\mathbf{g}}^{ij}  \; \left( 
 \frac{\partial  \mathcal{C}}{\partial u_j} \cdot \mathbf{J} \mathcal{C} \right) \;  \right] \\
& = &
\sum_{1 \leq i,j \leq 2N}  \frac{\partial}{\partial u_{i}} \left[ \;  \left( \sqrt{  \mathbf{g} \, } \;   {\mathbf{g}}^{ij} \; \frac{\partial   \mathcal{C}}{\partial u_j}  \right) \cdot \mathbf{J} \mathcal{C}  \;  \right] \\
& =&  \; \sum_{1 \leq i,j \leq 2N} \; \left[ \frac{\partial}{\partial u_{i}} \left(  \sqrt{  \mathbf{g} \, } \;   {\mathbf{g}}^{ij} \;    \frac{\partial   \mathcal{C}}{\partial u_j}    \; \right)  \; \right] \cdot  \mathbf{J} \mathcal{C}   \; + 
\sum_{1 \leq i,j \leq 2N} \;  \left(   \sqrt{  \mathbf{g} \, } \;   {\mathbf{g}}^{ij}   \frac{\partial   \mathcal{C}}{\partial u_j}  \right) \; \cdot \frac{\partial   }{\partial u_i} \left(  \mathbf{J} \mathcal{C} \right).
\end{eqnarray*}
Since the Clifford torus $\frac{1}{\sqrt{2}}  {\mathbb{S}}^{N} \times   \frac{1}{\sqrt{2}}  {\mathbb{S}}^{N} $  in minimal ${\mathbb{S}}^{2N+1} \subset {\mathbb{R}}^{2N+2}$, its mean curvature vector vanishes:
\[
   {\triangle}_{g_{\mathbf{C}}}  \mathbf{C} + 2N \mathbf{C} \equiv \mathbf{0},
\]
which implies that the first sum vanishes:
\[
 \left[ \; \sum_{1 \leq i,j \leq 2N}  \frac{\partial}{\partial u_{i}} \left(   \sqrt{  \mathbf{g} \, } \;   {\mathbf{g}}^{ij} \;   \frac{\partial   \mathcal{C}}{\partial u_j}    \right)  \; \right] \cdot  \mathbf{J} \mathcal{C} = (-2N \mathcal{C} ) \cdot \mathbf{J} \mathcal{C} =0.
\]
By using the fact that the matrix $\left(  {\mathbf{g}}_{ij}  \right)_{1 \leq i,j \leq 2N}$ is symmetric and by
noticing  that 
\[
 \frac{\partial   \mathcal{C}}{\partial u_i}   \cdot \frac{\partial   }{\partial u_j} \left(  \mathbf{J} \mathcal{C} \right)
=  \mathbf{J} \frac{\partial   \mathcal{C}}{\partial u_i}   \cdot  \mathbf{J} \frac{\partial   }{\partial u_j} \left(  \mathbf{J} \mathcal{C} \right)
=  - \frac{\partial  }{\partial u_i}  \left( \mathbf{J} \mathcal{C} \right) \cdot \frac{\partial   \mathcal{C}  }{\partial u_j}
\]
and
\[
 \frac{\partial   \mathcal{C}}{\partial u_i}   \cdot \frac{\partial   }{\partial u_i} \left(  \mathbf{J} \mathcal{C} \right)=0,
\]
we see that the second sum also vanishes:  
\[
\sum_{1 \leq i < j \leq 2N} \;   \sqrt{  \mathbf{g} \, } \;   {\mathbf{g}}^{ij}  \left[  \frac{\partial   \mathcal{C}}{\partial u_j}   \; \cdot \frac{\partial   }{\partial u_i} \left(  \mathbf{J} \mathcal{C} \right) +  \frac{\partial   \mathcal{C}}{\partial u_i}   \; \cdot \frac{\partial   }{\partial u_j} \left(  \mathbf{J} \mathcal{C} \right)  \right] =0.
\]
\textbf{(d)} From  $\frac{\partial  \mathcal{D}}{\partial u_{i}} \cdot \mathbf{J}  \mathcal{C} = - \frac{\partial  \mathcal{C}}{\partial u_{i}} \cdot \mathbf{J}  \mathcal{D}$ and  \textbf{(a)}, we have 
\begin{eqnarray*}
&&   \sum_{1 \leq i,j \leq 2N}   {\mathbf{g}}^{ij}  w_{j}  \frac{\partial }{\partial u_{i}}  \left(  {  \mathcal{D} \cdot \mathbf{J}  \mathcal{C}     }   \right)    \\
&=& \sum_{1 \leq i,j \leq 2N}   {\mathbf{g}}^{ij}  w_{j}     \left(   \frac{\partial  \mathcal{D}}{\partial u_{i}} \cdot \mathbf{J}  \mathcal{C}   \right)  +    \mathcal{D}  \cdot \mathbf{J}  \left( \sum_{1 \leq i,j \leq 2N}  {\mathbf{g}}^{ij}  w_{j}  \frac{\partial \mathcal{C}  }{\partial u_{i}}  \right) \\
&=& - \sum_{1 \leq i,j \leq 2N}   {\mathbf{g}}^{ij}  w_{j}     \left(   \frac{\partial  \mathcal{C}}{\partial u_{i}} \cdot \mathbf{J}  \mathcal{D}   \right)  -   \mathbf{J}  \mathcal{D}  \cdot \left( \sum_{1 \leq i,j \leq 2N}  {\mathbf{g}}^{ij}  w_{j}  \frac{\partial \mathcal{C}  }{\partial u_{i}}  \right) \\
&=& - 2   \left( \sum_{1 \leq i,j \leq 2N}   {\mathbf{g}}^{ij}  w_{j}      \frac{\partial  \mathcal{C}}{\partial u_{i}}   \right)   \cdot \mathbf{J}  \mathcal{D}   \\
&=& -2 \, \left[ \;   \mathbf{J} \mathcal{C} -  \left(\mathcal{D} \cdot \mathbf{J}  \mathcal{C} \right) \mathcal{D} \; \right]   \cdot \mathbf{J}  \mathcal{D}  \\
&=& - 2  \,   \mathcal{C} \cdot  \mathcal{D} \\
&=& 0.
\end{eqnarray*}
\textbf{(e)} Recall that the symmetric matrix $\left(  {\mathbf{g}}_{ij}  \right)_{1 \leq i,j \leq 2N}$ is a block 
matrix such that  ${\mathbf{g}}_{ij}={\mathbf{g}}_{ji}=0$ for $i \leq N$ and $j \geq N+1$, and note that  
\[
\frac{\partial }{\partial u_{i}}  \left(  {  \mathcal{D} \cdot \mathbf{J}  \mathcal{C}     }   \right)=
\begin{cases} 
\;\;2 w_{i}, \quad i  \in \left\{1, \cdots, N  \right\},  \\
-2 w_{i}, \;\;\;   i \in \left\{N+1, \cdots, 2N  \right\}.
\end{cases}
\]
Using the second identity in \textbf{(a)}, we have 
\begin{eqnarray*}
&&    \sum_{1 \leq i,j \leq 2N}   {\mathbf{g}}^{ij}     \frac{\partial }{\partial u_{i}}  \left(  {  \mathcal{D} \cdot \mathbf{J}  \mathcal{C}     }   \right) \frac{\partial \mathcal{C}}{\partial u_{j}}  \\
&=&   2  \sum_{1 \leq i,j \leq N}   {\mathbf{g}}^{ij} w_{i}  \frac{\partial  \mathcal{C}}{\partial u_{j}} - 2   \sum_{N+1 \leq i,j \leq 2N}   {\mathbf{g}}^{ij} w_{i}  \frac{\partial  \mathcal{C}}{\partial u_{j}} \\
&=& 2  \sum_{1 \leq i,j \leq 2N}   {\mathbf{g}}^{ij}   w_{i} \frac{\partial \mathcal{D}}{\partial u_{j}} \\
&=& - 2 \left(\, \mathbf{J} \mathcal{D} +  \left(\mathcal{D} \cdot \mathbf{J}  \mathcal{C} \right) \, \mathcal{C} \, \right).
\end{eqnarray*}
\end{proof}

\subsection{Multi-rays cones over the submaninfolds in a sphere}  \label{section multi rays}

\begin{definition}[\textbf{$L$-rays cone in ${\mathbb{R}}^{L(N+1)}$  over a submanifold in ${\mathbb{S}}^{N} \subset {\mathbb{R}}^{N+1}$}]  \label{general multi rays}
Given a submanifold $\Sigma$ in the unit hypersphere  ${\mathbb{S}}^{N} \subset {\mathbb{R}}^{N+1}$, we introduce
the $L$-rays cone in ${\mathbb{R}}^{L(N+1)}$ (possibly with a singularity at the origin)   
\[
  {\mathcal{C}}_{L} \left( \Sigma \right) = 
 \left\{\; \begin{bmatrix} \, r_{1}  \mathbf{P} \, \\   \vdots \\ \, r_{L}   \mathbf{P} \, 
\end{bmatrix}   \in {\mathbb{R}}^{L(N+1)}  \; \vert \;   r_{1}, \cdots, r_{L} \in \mathbb{R}, \mathbf{P} \in \Sigma  \; \right\}.
\] 
\end{definition}

\begin{theorem}[\textbf{Takahashi type equivalence for multi-rays cones}]   \label{ET}
Let ${{\Sigma}^{n}}$ be a submanifold of the unit hypersphere  ${\mathbb{S}}^{N} \subset {\mathbb{R}}^{N+1}$.
Then the following three statements are equivalent:
\begin{enumerate}
\item[\textbf{(a)}]   ${{\Sigma}^{n}}$ is minimal in  ${\mathbb{S}}^{N}$.
\item[\textbf{(b)}]  The following submanifold ${\mathcal{S}}_{L} \left({{\Sigma}^{n}} \right)$ is minimal in  ${\mathbb{S}}^{L(N+1) -1}$.
\[
  {\mathcal{S}}_{L} \left( {{\Sigma}^{n}} \right) = 
 \left\{\; \begin{bmatrix} \, x_{1}  \mathbf{P} \, \\   \vdots \\ \, x_{L}   \mathbf{P} \, 
\end{bmatrix}   \in {\mathbb{S}}^{L(N+1) -1} \subset {\mathbb{R}}^{L(N+1)}  \; \vert \;  \begin{bmatrix} \, x_{1}   \, \\   \vdots \\ \, x_{L}   \, 
\end{bmatrix}  \in   {\mathbb{S}}^{L-1} \subset {\mathbb{R}}^{L},   \mathbf{P} \in \Sigma  \; \right\}.
\] 
\item[\textbf{(c)}]  The $L$-rays cone ${\mathcal{C}}_{L} \left( {{\Sigma}^{n}} \right)$ is minimal in ${\mathbb{R}}^{L(N+1)}$. 
\end{enumerate}
\end{theorem}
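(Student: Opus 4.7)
\medskip
\noindent\textbf{Proof proposal.} The plan is to establish the chain $\textbf{(c)} \Longleftrightarrow \textbf{(b)} \Longleftrightarrow \textbf{(a)}$, using Takahashi's classical criterion ($\Delta_{\Sigma} X = -n X$ characterizes minimal submanifolds of the unit sphere) and the classical cone principle (a submanifold of $\mathbb{S}^{K-1}$ is minimal if and only if its Euclidean cone in $\mathbb{R}^{K}$ is minimal).

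For the equivalence $\textbf{(b)} \Longleftrightarrow \textbf{(c)}$, I observe that the multi-rays cone $\mathcal{C}_{L}(\Sigma^{n})$ is literally the Euclidean cone over $\mathcal{S}_{L}(\Sigma^{n})$. Indeed, for any nonzero point of $\mathcal{C}_{L}(\Sigma^{n})$, writing $(r_{1},\dots,r_{L}) = \rho\,(x_{1},\dots,x_{L})$ with $\rho=\sqrt{r_{1}^{2}+\cdots+r_{L}^{2}}$ and $(x_{1},\dots,x_{L})\in\mathbb{S}^{L-1}$, one sees that $(r_{1}\mathbf{P},\dots,r_{L}\mathbf{P}) = \rho\,(x_{1}\mathbf{P},\dots,x_{L}\mathbf{P})$ and the right factor lies in $\mathbb{S}^{L(N+1)-1}$ since $\sum x_{i}^{2}\|\mathbf{P}\|^{2}=1$. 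Hence the classical cone construction applies and delivers \textbf{(b)} $\Longleftrightarrow$ \textbf{(c)}.

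The core of the argument is \textbf{(a)} $\Longleftrightarrow$ \textbf{(b)}. First I would show that the parameterization $\Phi(\mathbf{x},\mathbf{P}) = (x_{1}\mathbf{P},\dots,x_{L}\mathbf{P})$ induces on $\mathcal{S}_{L}(\Sigma^{n})$ exactly the Riemannian product metric on $\mathbb{S}^{L-1}\times\Sigma^{n}$. Indeed, computing inner products of the partial derivatives in coordinates $(u_{\alpha})$ on $\mathbb{S}^{L-1}$ and $(v_{\beta})$ on $\Sigma^{n}$ yields $\langle \partial_{u_{\alpha}}\Phi,\partial_{u_{\alpha'}}\Phi\rangle = \langle \partial_{u_{\alpha}}\mathbf{x},\partial_{u_{\alpha'}}\mathbf{x}\rangle\,\|\mathbf{P}\|^{2}$, $\langle \partial_{v_{\beta}}\Phi,\partial_{v_{\beta'}}\Phi\rangle = \|\mathbf{x}\|^{2}\langle \partial_{v_{\beta}}\mathbf{P},\partial_{v_{\beta'}}\mathbf{P}\rangle$, and the mixed terms vanish because $\sum_{i} x_{i}\partial_{u_{\alpha}}x_{i} = \tfrac{1}{2}\partial_{u_{\alpha}}\|\mathbf{x}\|^{2} = 0$. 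Hence the Laplacian on $\mathcal{S}_{L}(\Sigma^{n})$ splits as $\Delta = \Delta_{\mathbb{S}^{L-1}} + \Delta_{\Sigma}$.

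Applied to the embedding $\Phi$, the first summand gives $\Delta_{\mathbb{S}^{L-1}}\Phi = -(L-1)\,\Phi$ (since the coordinates $x_{i}$ are the standard first eigenfunctions on $\mathbb{S}^{L-1}$), while the second gives $\Delta_{\Sigma}\Phi = (x_{1}\Delta_{\Sigma}\mathbf{P},\dots,x_{L}\Delta_{\Sigma}\mathbf{P})$. Takahashi's theorem then makes both directions transparent: if $\Sigma^{n}$ is minimal in $\mathbb{S}^{N}$, then $\Delta_{\Sigma}\mathbf{P}=-n\mathbf{P}$, so $\Delta\Phi = -((L-1)+n)\,\Phi$ with $\|\Phi\|=1$, proving $\mathcal{S}_{L}(\Sigma^{n})$ is minimal in $\mathbb{S}^{L(N+1)-1}$. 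Conversely, if $\mathcal{S}_{L}(\Sigma^{n})$ is minimal, comparing the two expressions forces $x_{i}\Delta_{\Sigma}\mathbf{P} = -n\,x_{i}\mathbf{P}$ for each $i$; choosing any $\mathbf{x}\in\mathbb{S}^{L-1}$ with $x_{i}\neq 0$ recovers $\Delta_{\Sigma}\mathbf{P} = -n\mathbf{P}$, so $\Sigma^{n}$ is minimal in $\mathbb{S}^{N}$.

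The computation is elementary once the product structure of the induced metric is in hand; accordingly the only point requiring care will be the verification that the cross terms $\langle \partial_{u_{\alpha}}\Phi,\partial_{v_{\beta}}\Phi\rangle$ vanish and that $\Phi$ is a genuine unit-norm immersion into $\mathbb{S}^{L(N+1)-1}$, so that the two classical principles (Takahashi and Euclidean coning) can be invoked cleanly.
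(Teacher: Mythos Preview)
Your proposal is correct and reaches the same triple equivalence, but the route differs from the paper's. Both arguments dispatch $\textbf{(b)}\Leftrightarrow\textbf{(c)}$ the same way, by recognizing $\mathcal{C}_{L}(\Sigma^{n})$ as the Euclidean cone over $\mathcal{S}_{L}(\Sigma^{n})$ and invoking the classical cone/Takahashi principle. The difference is in the remaining leg. The paper proves $\textbf{(a)}\Leftrightarrow\textbf{(c)}$ directly: it writes the induced metric on $\mathcal{C}_{L}(\Sigma^{n})$ in the warped form $G=dr_{1}^{2}+\cdots+dr_{L}^{2}+\mathcal{R}\,g_{\Sigma}$ with $\mathcal{R}=r_{1}^{2}+\cdots+r_{L}^{2}$, and computes that the mean curvature vector $\Delta_{G}\Phi$ has $k$-th block $\frac{r_{k}}{\mathcal{R}}\bigl(n\mathbf{F}+\Delta_{g}\mathbf{F}\bigr)$, so it vanishes iff $\Delta_{g}\mathbf{F}=-n\mathbf{F}$. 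You instead prove $\textbf{(a)}\Leftrightarrow\textbf{(b)}$ directly, by checking that the immersion $(\mathbf{x},\mathbf{P})\mapsto(x_{1}\mathbf{P},\dots,x_{L}\mathbf{P})$ pulls back to the \emph{honest Riemannian product} metric on $\mathbb{S}^{L-1}\times\Sigma^{n}$, so the Laplacian splits and Takahashi applies twice. Your approach buys a clean structural statement (the product isometry $\mathcal{S}_{L}(\Sigma^{n})\cong\mathbb{S}^{L-1}\times\Sigma^{n}$) and sidesteps the warped-product Laplacian; the paper's approach is marginally more direct for statement \textbf{(c)}, which is the form actually used later (Corollary~\ref{minimal multi-lays clifford cones}). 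Both computations are short and essentially equivalent in difficulty.
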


\begin{remark}
 The case  $L=1$ of Theorem \ref{ET} was proved in \cite[Theorem 3]{Takahashi1966} and \cite[Proposition 6.1.1]{Simons1968}.
\end{remark}

\begin{proof} 
Since ${\mathcal{C}}_{L} \left({{\Sigma}^{n}} \right)$ is  the cone over ${\mathcal{S}}_{L} \left({{\Sigma}^{n}} \right)$, Takahashi's Theorem \cite{Takahashi1966}
guarantees the equivalence \textbf{(b)} $\Leftrightarrow$ \textbf{(c)}. It now remains to prove the equivalence \textbf{(a)} $\Leftrightarrow$ \textbf{(c)}.
Let ${\mathbf{F}}\left(u_{1}, \cdots, u_{n}\right)$ be a local patch of  ${{\Sigma}^{n}}$ in ${\mathbb{S}}^{N} \subset {\mathbb{R}}^{N+1}$. We write 
\[
g = g_{\Sigma} = {(g_{ij})}_{1 \leq i,j \leq n}={\left(  \frac{\partial \mathbf{F}}{\partial u_{i}}  \cdot   \frac{\partial \mathbf{F}}{\partial u_{j}}    \right)}_{  1 \leq i,j \leq n }.
\]
It follows from Takahashi's Theorem \cite{Takahashi1966} that ${{\Sigma}^{n}}$ becomes a minimal submanifold  in ${\mathbb{S}}^{N}$ if and only if
\[
  {\mathbf{0}}_{  {\mathbb{R}}^{N+1}  } = n{\mathbf{F}} + {\triangle}_{g} {\mathbf{F}}.
\]
Next, the induced patch of the $L$-rays cone ${\mathcal{C}}_{L} \left( {{\Sigma}^{n}} \right)$ reads
\[
\Phi \left(r_{1}, \cdots, r_{L}, u_{1}, \cdots, u_{n} \right) =  
\begin{bmatrix} r_{1}  {\mathbf{F}}\left(u_{1}, \cdots, u_{n}\right)  \\   \vdots \\ r_{L}  {\mathbf{F}}\left(u_{1}, \cdots, u_{n}\right)
\end{bmatrix}  \in {\mathbb{R}}^{L(N+1)}.
\]
By observing that the induced metric of the $L$-rays cone ${\mathcal{C}}_{L} \left( {{\Sigma}^{n}} \right)$ reads
\[
G =  {dr_{1}}^{2} + \cdots +  {dr_{L}}^{2} +  \mathcal{R} g, \quad 
 \mathcal{R}  = {r_{1}}^{2} + \cdots +  {{r}_{L}}^{2},  
\]
we are able to explicitly compute   the mean curvature vector $\mathbf{H}$ of  ${\mathcal{C}}_{L} \left( {{\Sigma}^{n}} \right) \subset 
{\mathbb{R}}^{L(N+1)}$ in terms of local coordinates $u_{1}, \cdots, u_{n}, r_{1}, \cdots, r_{L}:$
\[
\mathbf{H}= {\triangle}_{G} \Phi = \begin{bmatrix} 
  \; \frac{r_{1}}{ \mathcal{R}   }  \left( n{\mathbf{F}} + {\triangle}_{g} {\mathbf{F}} \right) \; \\   \vdots \\  
 \; \frac{r_{L}}{ \mathcal{R}   }  \left( n{\mathbf{F}} + {\triangle}_{g} {\mathbf{F}} \right) \; 
\end{bmatrix}. 
\]
Therefore, we see that the mean curvature vector filed $\mathbf{H}$ of the cone   ${\mathcal{C}}_{L} \left( {{\Sigma}^{n}} \right)$  vanishes  if and only if 
${\mathbf{0}}_{  {\mathbb{R}}^{N+1}  } = n{\mathbf{F}} + {\triangle}_{g} {\mathbf{F}}$.
\end{proof}

  The $L$-rays cone in  ${\mathbb{R}}^{L(2N+2)}$ over the Clifford torus $\frac{1}{\sqrt{2}}  {\mathbb{S}}^{N} \times   \frac{1}{\sqrt{2}}  {\mathbb{S}}^{N}$  
  will play an important role in Theorem \ref{gHEL2}.

\begin{definition}[\textbf{$L$-rays Clifford cone in Euclidean space ${\mathbb{R}}^{L(2N+2)}$}]  \label{GCC}
Let $L \geq 1$, $N \geq 0$ be integers.  We introduce the $L$-rays Clifford cone    in ${\mathbb{R}}^{L(2N+2)}={\mathbb{C}}^{(N+1)L}$. 
 Definition \ref{general multi rays} gives an explicit expression 
\[
 {\mathcal{C}}_{L} \left(   \frac{1}{\sqrt{2}}  {\mathbb{S}}^{N} \times 
 \frac{1}{\sqrt{2}}  {\mathbb{S}}^{N}             \right) = \left\{\; \begin{bmatrix} r_{1} \left( X+ i Y \right) \\   \vdots \\ r_{L}  \left( X+ i Y \right)
\end{bmatrix}    \; \vert \;   r_{1}, \cdots, r_{L} \in \mathbb{R}, {\Vert X \Vert}_{  {\mathbb{R}}^{N+1} } =  {\Vert Y \Vert}_{  {\mathbb{R}}^{N+1} } =1 \; \right\}.
\]
\end{definition}

\begin{cor}[\textbf{Minimality of multi-rays Clifford cones}]     \label{minimal multi-lays clifford cones}
The $L$-rays Clifford cone 
${\mathcal{C}}_{L} \left(   \frac{1}{\sqrt{2}}  {\mathbb{S}}^{N} \times   \frac{1}{\sqrt{2}}  {\mathbb{S}}^{N}             \right)$
is a minimal submanifold in ${\mathbb{R}}^{L(2N+2)}$.
\end{cor}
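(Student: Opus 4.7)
The plan is to deduce this as an immediate application of Theorem \ref{ET}. In the notation of that theorem, take the ambient sphere to be ${\mathbb{S}}^{2N+1} \subset {\mathbb{R}}^{2N+2}$ (so the parameter called $N$ in Theorem \ref{ET} is replaced here by $2N+1$, and the Euclidean ambient of the cone becomes ${\mathbb{R}}^{L(2N+2)}$), and take the submanifold to be the Clifford torus $\Sigma = \frac{1}{\sqrt{2}}{\mathbb{S}}^{N} \times \frac{1}{\sqrt{2}}{\mathbb{S}}^{N}$. Then the $L$-rays cone from Definition \ref{general multi rays} is literally the $L$-rays Clifford cone of Definition \ref{GCC}.

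It is classical (and in fact used already in the proof of Lemma \ref{magic}, part \textbf{(c)}) that the Clifford torus $\frac{1}{\sqrt{2}}{\mathbb{S}}^{N} \times \frac{1}{\sqrt{2}}{\mathbb{S}}^{N}$ is a minimal hypersurface in ${\mathbb{S}}^{2N+1}$: the position map $\mathcal{C}$ satisfies $\triangle_{g_{\mathcal{C}}}\mathcal{C} + 2N\,\mathcal{C} \equiv \mathbf{0}$, which is exactly Takahashi's minimality equation in the sphere. Hence statement \textbf{(a)} of Theorem \ref{ET} holds for $\Sigma$.

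Invoking the implication \textbf{(a)} $\Rightarrow$ \textbf{(c)} of Theorem \ref{ET} then yields that ${\mathcal{C}}_{L}\!\left(\frac{1}{\sqrt{2}}{\mathbb{S}}^{N} \times \frac{1}{\sqrt{2}}{\mathbb{S}}^{N}\right)$ is a minimal submanifold of ${\mathbb{R}}^{L(2N+2)}$, which is precisely the statement of the corollary. There is no real obstacle here: all the work has already been done in Theorem \ref{ET}, and the only thing to check is the notational match between Definitions \ref{general multi rays} and \ref{GCC}, together with the well-known minimality of the Clifford torus in the round sphere.
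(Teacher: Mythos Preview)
Your proof is correct and is exactly the argument the paper has in mind: the corollary is stated immediately after Theorem~\ref{ET} and Definition~\ref{GCC} with no separate proof, precisely because it follows at once from the implication \textbf{(a)}~$\Rightarrow$~\textbf{(c)} of Theorem~\ref{ET} applied to the minimal Clifford torus $\frac{1}{\sqrt{2}}{\mathbb{S}}^{N}\times\frac{1}{\sqrt{2}}{\mathbb{S}}^{N}\subset{\mathbb{S}}^{2N+1}$.
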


\begin{remark} We observe that ${\mathcal{C}}_{1} \left(   \frac{1}{\sqrt{2}}  {\mathbb{S}}^{N} \times   \frac{1}{\sqrt{2}}  {\mathbb{S}}^{N}  \right) $ is the classical Clifford cone. After applying reflections in ${\mathbb{R}}^{4N+4}$, ${\mathcal{C}}_{2} \left(   \frac{1}{\sqrt{2}}  {\mathbb{S}}^{N} \times   \frac{1}{\sqrt{2}}  {\mathbb{S}}^{N}  \right) $ is congruent to Harvey-Lawson's \textit{twisted normal cone}  \cite[Theorem 3.17]{HL1982} over the Clifford torus. See  
Example \ref{E4} in Section \ref{MAINresults} and \cite[Example 3.22]{HL1982}. 
\end{remark}

\section{Generalized helicoids in Euclidean space ${\mathbb{R}}^{L(2N+2)+1}$}      \label{MAINresults}

For any given pair $\left( {\lambda}_{0},  {\lambda}_{1} \right) $ of real constants, the submanifold
\[
   \left\{ \; 
 \begin{bmatrix} \; \mathbf{r} \; e^{ \, i  \left( {\lambda}_{1}  \Theta \right) \, }  \\  {\lambda}_{0}  \Theta  \end{bmatrix} \in 
{\mathbb{C}} \times \mathbb{R} \; \vert \;   \Theta, \mathbf{r} \in \mathbb{R}
\right\}
\]
is minimal in ${\mathbb{R}}^{3}$. We present two generalizations and four examples.

\begin{theorem}[\textbf{Sweeping out $L$ indpendent Clifford cones in ${\mathbb{R}}^{2N+2}$}] \label{gHEL1}
Let $L \geq 1$, $N \geq 0$ be integers. Given an  $(L+1)$-tuple ${\Lambda}= \left( {\lambda}_{0}, {\lambda}_{1}, \cdots, {\lambda}_{L} \right)$ of real numbers 
and a collection $\mathbf{C}=\left\{ {\mathcal{C}}_{1}, \cdots, {\mathcal{C}}_{L} \right\}$ of  $L$ independent  
$2N$-dimensional Clifford tori lying in the unit hypersphere ${\mathbb{S}}^{2N+1} \subset  {\mathbb{R}}^{2N+2}$,  we define the generalized helicoid ${\mathcal{H}}^{\Lambda, \mathbf{C}} \subset {\mathbb{R}}^{L(2N+2)+1} = {\mathbb{C}}^{(N+1)L} \times {\mathbb{R}}$ 
\[
   {\mathcal{H}}^{\Lambda, \mathbf{C}} = \left\{ \; \begin{bmatrix}
\, r_{1} \, e^{i( \lambda_{1} \Theta )} \left( {X_{1}} + i  {Y_{1}}   \right) \\ 
\vdots \\
\, r_{L} \, e^{i( \lambda_{L} \Theta )} \left( {X_{L}} + i  {Y_{L}}   \right) \\ 
  {\lambda_{0}} \Theta
\end{bmatrix}  \, \vert \;  \Theta \in \mathbb{R},  r_{t} \in \mathbb{R}, \begin{bmatrix}
X_t \\ 
Y_t
\end{bmatrix} \in {\mathcal{C}}_{t},  1 \leq t \leq L \right\}.
\]
Then,  ${\mathcal{H}}^{\Lambda, \mathbf{C}}$ is a minimal submanifold in  ${\mathbb{R}}^{L(2N+2)+1}$ and invariant under the multi-screw motion  ${\mathcal{S}}_{{\Lambda}}$ 
introduced in Definition \ref{MSM}. 
\end{theorem}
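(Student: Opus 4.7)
The plan is to verify minimality by direct computation of the mean curvature vector in the natural parametrization. Choose local charts $(u_{1}^{(t)}, \dots, u_{2N}^{(t)}) \mapsto \mathcal{C}_{t}$ of each Clifford torus as in Lemma~\ref{magic}, so that $\sqrt{2}\,\mathcal{C}_{t}$ plays the role of $(X_{t}, Y_{t})$ on the $t$-th complex factor of $\mathbb{C}^{(N+1)L}$. Using $\bigl(\Theta, r_{1}, \dots, r_{L}, u_{1}^{(1)}, \dots, u_{2N}^{(L)}\bigr)$ as local coordinates on $\mathcal{H}^{\Lambda,\mathbf{C}}$ via the formula of the theorem, the invariance under $\mathcal{S}_{\Lambda}$ is immediate: the action with parameter $s$ is just the shift $\Theta \mapsto \Theta + s$ of this parametrization.

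Next, compute the induced metric $G$. Because the screw rotations act isometrically and the $L$ Clifford tori occupy independent ambient factors, the block structure is clean: the $r$-block is $G_{r_{s}r_{t}} = 2\delta_{st}$, and $\partial_{r_{t}}\Phi$ is orthogonal to every $\partial_{u_{j}^{(s)}}\Phi$ (using $\mathcal{C}_{t}\cdot\partial_{j}\mathcal{C}_{t} = 0$) and to $\partial_{\Theta}\Phi$ (using $\mathcal{C}_{t}\cdot \mathbf{J}\mathcal{C}_{t} = 0$). The $uu$-block is block-diagonal with factors $2r_{t}^{2} g^{(t)}_{ij}$, and the only nontrivial coupling lives on the $\Theta$-row,
\[
G_{\Theta\Theta} = \lambda_{0}^{2} + 2\sum_{t=1}^{L} r_{t}^{2}\lambda_{t}^{2}, \qquad G_{\Theta, u_{j}^{(t)}} = 2 r_{t}^{2}\lambda_{t}\, w_{j}^{(t)}.
\]
Identity~(b) of Lemma~\ref{magic} applied factor by factor identifies the Schur complement of this $\Theta$-row over the $uu$-block as
\[
\Omega := \lambda_{0}^{2} + 2\sum_{t=1}^{L} r_{t}^{2}\lambda_{t}^{2}\bigl(\mathcal{D}_{t}\cdot \mathbf{J}\mathcal{C}_{t}\bigr)^{2},
\]
from which $\sqrt{G}$, the entry $G^{\Theta\Theta} = 1/\Omega$, the cross-entries $G^{\Theta, u_{j}^{(t)}}$, and the corrected $uu$-block of $G^{-1}$ follow as rational expressions in $\Omega$, the $g_{(t)}^{ij}$, and the $w_{j}^{(t)}$.

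With $G^{-1}$ in hand, compute $\mathbf{H} = \Delta_{G}\Phi = \tfrac{1}{\sqrt{G}}\partial_{A}\bigl(\sqrt{G}\,G^{AB}\partial_{B}\Phi\bigr)$ component by component. The final coordinate of $\Phi$ depends only on $\Theta$, so $\mathbf{H}_{z} = \tfrac{\lambda_{0}}{\sqrt{G}}\partial_{A}(\sqrt{G}\,G^{A\Theta})$; the $\partial_{\Theta}$ contribution vanishes by the $\Theta$-independence of every metric coefficient, the $\partial_{r_{t}}$ contributions vanish because $G^{r_{t}\Theta}=0$, and the remaining $\partial_{u}$ contributions collapse on each Clifford factor to the divergence-free identity~(c) combined with~(d) of Lemma~\ref{magic}. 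For the $t$-th complex block of $\mathbf{H}$, the principal cone contribution $r_{t}^{-1}\bigl(2N\mathcal{C}_{t} + \Delta_{g^{(t)}}\mathcal{C}_{t}\bigr)$ vanishes by minimality of the Clifford torus in $\mathbb{S}^{2N+1}$, exactly as in the proof of Theorem~\ref{ET}; the residual contributions, produced when $\partial_{A}$ hits the cross-terms $G^{\Theta u}$ and $G^{\Theta\Theta}\partial_{\Theta}\Phi$, assemble into a linear combination of $\mathbf{J}\mathcal{D}_{t}$, $\mathbf{J}\mathcal{C}_{t}$, and $\mathcal{D}_{t}$ that identities~(a) and~(e) collapse to zero.

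The main obstacle is the bookkeeping for $G^{-1}$: the $\Theta$-line couples all $L$ Clifford factors nontrivially, so the off-diagonal blocks of $G^{-1}$ are non-zero and each carries a factor of $\Omega^{-1}$. Two simplifications ease the computation. First, by $\mathcal{S}_{\Lambda}$-invariance it is enough to verify $\mathbf{H} = 0$ on the cross-section $\Theta = 0$. Second, after grouping terms the algebra in each complex block involves only the single Clifford torus $\mathcal{C}_{t}$, so the calculation reduces to $L$ independent applications of Lemma~\ref{magic}, coupled only through the scalar $\Omega$ from the Schur complement; all the substance of the theorem is thus already encoded in that lemma together with the minimality of the Clifford torus.
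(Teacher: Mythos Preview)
Your approach is essentially identical to the paper's own proof: compute the induced metric, invert it via the Schur complement of the $\Theta$-row (your $\Omega$ is the paper's $\mathcal{P}$), and then verify $\Delta_{G}\Phi = 0$ coordinate by coordinate using exactly the identities of Lemma~\ref{magic} together with the minimality of the Clifford torus in $\mathbb{S}^{2N+1}$. (One harmless normalization slip: since $\begin{bmatrix} X_t \\ Y_t \end{bmatrix}$ already lies in $\mathbb{S}^{2N+1}$, you should take $\mathcal{C}_t = (X_t, Y_t)$ rather than $\sqrt{2}\,\mathcal{C}_t$, so the stray factors of $2$ in your metric entries $G_{r_s r_t}$, $G_{uu}$, $G_{\Theta u}$, and $\Omega$ should all be dropped.)
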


\begin{remark}  \label{CONE by THM 2}
When ${\lambda}_{0}=0$,  ${\mathcal{H}}^{\Lambda, \mathbf{C}}$ becomes a minimal cone in ${\mathbb{R}}^{L(2N+2)}$.  In the particular case when $\left(  {\lambda}_{0},  {\lambda}_{1}, \cdots,  {\lambda}_{L}  \right)=\left(0, \cdots, 0 \right)$,  ${\mathcal{H}}^{\Lambda, \mathbf{C}}$ is the product of $L$ independent Clifford cones.
 \end{remark}

\begin{theorem}[\textbf{Sweeping out the $L$-rays Clifford cone  in ${\mathbb{R}}^{L(2N+2)}$}]   \label{gHEL2}
Let $L \geq 1$, $N \geq 0$ be integers. Given two real constants ${\lambda}_{0}, {\lambda}$ and the $L$-rays Clifford cone ${\mathcal{C}}_{L} \left(   \frac{1}{\sqrt{2}}  {\mathbb{S}}^{N} \times   \frac{1}{\sqrt{2}}  {\mathbb{S}}^{N}  \right)$, 
we define the generalized helicoid $ {\mathcal{H}}^{\lambda, {\lambda}_{0}, L,N}$ in Euclidean space  ${\mathbb{R}}^{L(2N+2)+1} = {\mathbb{C}}^{(N+1)L} \times {\mathbb{R}}$ 
\[
   {\mathcal{H}}^{\lambda, {\lambda}_{0}, L, N} = \left\{ \; \begin{bmatrix}
 e^{i \left( \lambda \Theta \right) }  Z \\ 
  {\lambda}_{0} \Theta
\end{bmatrix} \in {\mathbb{C}}^{(N+1)L} \times {\mathbb{R}} \, \vert \;   \Theta \in \mathbb{R},  Z \in {\mathcal{C}}_{L} \left(   \frac{1}{\sqrt{2}}  {\mathbb{S}}^{N} \times   \frac{1}{\sqrt{2}}  {\mathbb{S}}^{N}  \right)  \right\}. 
\]
More explicitly, we have 
\[
{\mathcal{H}}^{\lambda, {\lambda}_{0}, L, N} = \left\{ \; \begin{bmatrix}
\, r_{1} \, e^{i( \lambda  \Theta )} \left( {X} + i  {Y}   \right) \\ 
\vdots \\
\, r_{L}  \, e^{i( \lambda  \Theta )} \left( {X} + i  {Y}   \right) \\ 
  {\lambda_{0}} \Theta
\end{bmatrix}  \, \vert \;  \Theta \in \mathbb{R},  r  \in \mathbb{R}, \begin{bmatrix}
X  \\ 
Y 
\end{bmatrix} \in   \frac{1}{\sqrt{2}}  {\mathbb{S}}^{N} \times   \frac{1}{\sqrt{2}}  {\mathbb{S}}^{N} \right\}.
\]
Then, the variety  $ {\mathcal{H}}^{\lambda, {\lambda}_{0}, L,N}$ is minimal in  ${\mathbb{R}}^{L(2N+2)+1}$. It is invariant under the multi-screw motion  ${\mathcal{S}}_{{\Lambda}=\left(  \lambda, \cdots, \lambda, {\lambda}_{0} \right)}$ introduced in Definition \ref{MSM}. 
\end{theorem}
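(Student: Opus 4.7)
The plan is to parameterize $\mathcal{H}^{\lambda, \lambda_0, L, N}$ explicitly and show that the mean curvature vector $\mathbf{H} = \Delta_G \Phi$ vanishes, leaning heavily on the identities collected in Lemma \ref{magic}. Let $\mathcal{C}(u_1, \dots, u_{2N})$ be a local chart of the Clifford torus $\frac{1}{\sqrt{2}}\mathbb{S}^N \times \frac{1}{\sqrt{2}}\mathbb{S}^N$; after absorbing the factor $\sqrt{2}$ into the radial variables $r_t$, a point of $\mathcal{H}^{\lambda, \lambda_0, L, N}$ takes the form
\[
\Phi(\Theta, r_1, \dots, r_L, u_1, \dots, u_{2N}) = \bigl( r_1 e^{i\lambda\Theta}\mathcal{C}, \dots, r_L e^{i\lambda\Theta}\mathcal{C}, \lambda_0\Theta \bigr).
\]
A direct computation using $|\mathcal{C}|=1$, $\mathcal{C}\perp\mathbf{J}\mathcal{C}$, and $w_j = \partial_{u_j}\mathcal{C}\cdot\mathbf{J}\mathcal{C}$ shows that the induced metric $G$ decouples into the identity on the $r_t$-variables plus a $(1+2N)\times(1+2N)$ block coupling $\Theta$ and the $u_j$. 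Its Schur complement, via Lemma \ref{magic}(b), collapses the messy expression $\lambda^2\mathcal{R}+\lambda_0^2 - (\lambda\mathcal{R})^2\mathcal{R}^{-1}\sum_{jk}\mathbf{g}^{jk}w_jw_k$ to the clean quantity $\mu := \lambda^2\mathcal{R}\alpha^2 + \lambda_0^2$ with $\alpha := \mathcal{D}\cdot\mathbf{J}\mathcal{C}$ and $\mathcal{R} := \sum_t r_t^2$, so $\sqrt{\det G} = \mathcal{R}^N\sqrt{\mu\mathbf{g}}$ and every entry of $G^{-1}$ takes explicit form.

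Next I would verify $\mathbf{H}=0$ componentwise. For the vertical coordinate $\Phi^z = \lambda_0\Theta$, the Laplace--Beltrami operator reduces to divergences in $u_j$: Lemma \ref{magic}(c) annihilates the principal divergence $\partial_{u_j}(\sqrt{\mathbf{g}}\,\mathbf{g}^{jk}w_k)$, while Lemma \ref{magic}(d) kills the residual term coming from differentiating the factor $\mu^{-1/2}$. For each complex block $\Phi^{[t]} = r_t e^{i\lambda\Theta}\mathcal{C}$, the Laplacian splits into contributions from the $(\Theta,\Theta)$, $(r_t,r_t)$, mixed $(\Theta,u_j)$, block $(u_j,u_k)$, and Schur-correction $(u_j,u_k)$ parts of $G^{-1}$. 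Here Lemma \ref{magic}(a) converts $\sum_{jk}\mathbf{g}^{jk}w_k\partial_{u_j}\mathcal{C}$ into $\mathbf{J}\mathcal{C} - \alpha\mathcal{D}$, Lemma \ref{magic}(e) converts $\sum_{jk}\mathbf{g}^{jk}\partial_{u_j}\alpha\,\partial_{u_k}\mathcal{C}$ into $-2(\mathbf{J}\mathcal{D} + \alpha\mathcal{C})$, and the minimality of the Clifford torus in $\mathbb{S}^{2N+1}$ gives $\Delta_{\mathbf{g}}\mathcal{C} = -2N\mathcal{C}$ to handle the $(u,u)$ block. Invariance of $\mathcal{H}^{\lambda,\lambda_0,L,N}$ under the multi-screw motion $\mathcal{S}_{(\lambda_0,\lambda,\dots,\lambda)}$ is transparent from the parameterization, since the motion merely shifts $\Theta \mapsto \Theta + \theta$.

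The hard part will be the bookkeeping in the complex-block calculation, where one must keep track of a substantial number of terms proportional to $\mathcal{C}$, $\mathcal{D}$, $\mathbf{J}\mathcal{C}$, $\mathbf{J}\mathcal{D}$, and the tangent vectors $\partial_{u_j}\mathcal{C}$, and verify that every non-trivial contribution cancels after invoking Lemma \ref{magic}. The cancellation is orchestrated by the precise form $\mu = \lambda^2\mathcal{R}\alpha^2 + \lambda_0^2$ of the Schur complement: the factor $\alpha^2$ is exactly what allows the $-\lambda^2\sqrt{\det G}/\mu$ coefficient coming from the $(\Theta,\Theta)$ term to balance against the $-2N\mathcal{C}$ contribution from the $(u,u)$ block and the $\mathbf{J}\mathcal{D}$-contributions arising from the mixed and Schur-correction terms. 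In effect, the identities of Lemma \ref{magic} appear to be engineered precisely to make this closing calculation work.
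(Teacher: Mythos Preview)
Your proposal is correct and follows essentially the same route the paper takes for Theorem~\ref{gHEL1} (which the paper then declares ``similar'' for Theorem~\ref{gHEL2} and omits): parameterize, compute the induced metric and its inverse via a Schur complement simplified by Lemma~\ref{magic}(b), then verify $\Delta_G\Phi=0$ coordinatewise using Lemma~\ref{magic}(a)--(e) and the minimality of the Clifford torus. Your quantities $\mathcal{R}=\sum_t r_t^2$, $\mu=\lambda^2\mathcal{R}\alpha^2+\lambda_0^2$, and $\sqrt{\det G}=\mathcal{R}^N\sqrt{\mu\,\mathbf{g}}$ are exactly the Theorem~\ref{gHEL2} analogues of the paper's $\mathcal{R}$, $\mathcal{P}$, and $\sqrt{\mathbf{G}}$, and the roles you assign to parts (a)--(e) of Lemma~\ref{magic} mirror the paper's Steps~B and C1--C7.
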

 
\begin{remark}  \label{CONE by THM 3}
When $\lambda={\lambda}_{0}=0$,   $ {\mathcal{H}}^{0, 0, L,N}$ is a minimal cone in Corollary \ref{minimal multi-lays clifford cones}.
 \end{remark}
 
 \newpage

\begin{example}[\textbf{Choe-Hoppe's minimal hypersurface \cite[Theorem 2]{CH2013}}]    \label{E1}
Taking $L=1$ in Theorem  \ref{gHEL1} or  Theorem \ref{gHEL2} recovers the Choe-Hoppe helicoid \cite{CH2013}. 
More explicitly, sweeping out the Clifford cone  in  ${\mathbb{R}}^{2N} \subset {\mathbb{R}}^{2N+1}$ 
\[
   {\mathbf{C}}^{2N-1} = \left\{ \; 
\begin{bmatrix}
  \, p_{1}   \, \\ 
  \, q_{1}   \,   \\ 
\vdots \\
  \, p_{N}    \,  \\ 
  \, q_{N}  \,   \\ 
\end{bmatrix}  \in   {\mathbb{R}}^{2N}  \; \; \vert \; \;   {p_{1}}^{2} + \cdots +  {p_{N}}^{2}  = {q_{1}}^{2} + \cdots +  {q_{N}}^{2}  \;
\right\}
\]
yields the minimal submanifold in  ${\mathbb{R}}^{2N+1}$
\[
{\mathcal{H}}_{\lambda}  = \left\{ \; 
\begin{bmatrix}
   x_{1} \\ 
  y_{1}  \\ 
\vdots \\
 x_{N} \\ 
 y_{N}   \\ 
z \\
\end{bmatrix} =
\begin{bmatrix}
  \, p_{1} \cos \Theta  - q_{1} \sin \Theta  \, \\ 
  \, q_{1} \cos \Theta  +p _{1} \sin \Theta  \,   \\ 
\vdots \\
  \, p_{N} \cos \Theta  - q_{N} \sin \Theta   \,  \\ 
  \, q_{N} \cos \Theta  +p _{N} \sin \Theta   \,    \\ 
  \lambda  \, \Theta \\
\end{bmatrix}  \in   {\mathbb{R}}^{2N+1}  \; \vert \;    \Theta \in \mathbb{R}, \; \begin{bmatrix}
  \, p_{1}   \, \\ 
  \, q_{1}   \,   \\ 
\vdots \\
  \, p_{N}    \,  \\ 
  \, q_{N}  \,   \\ 
\end{bmatrix}  \in {\mathbf{C}}^{2N-1} \;
\right\},
\]
for any pitch constant $\lambda \in \mathbb{R}$. 
\end{example}

\begin{remark}  \label{OTHERproofs}
Observing that ${\mathbf{C}}^{2N-1} $ is a cone, one finds that ${\mathcal{H}}_{\lambda}$ is homothetic to ${\mathcal{H}}_{1}$ for any non-zero constant $\lambda \in \mathbb{R}$. 
Up to homotheties, the Choe-Hoppe helicoid in  ${\mathbb{R}}^{2N+1}$ can be represented as the hypersurface
\[
 z= \mathbf{arg} \, \left(  \sqrt{  \; (x_{1}+i y_{1})^{2}  + \cdots + 
  (x_{N}+i y_{N})^{2}  \;  } \;  \right).
\]
We can also deduce its minimality by checking that the function
\[
 f \left( x_{1}, y_{1}, \cdots, x_{N}, y_{N} \right) = \frac{1}{2} \, \mathrm{arctan} \; \left( \, \frac{ 2 x_{1}y_{1} + \cdots + 2 x_{N}y_{N} }{ {x_{1}}^{2} - {y_{1}}^{2} + \cdots +  {x_{N}}^{2} - {y_{N}}^{2}   } \, \right)
\]
satisfies the minimal hypersurface equation in  ${\mathbb{R}}^{2N+1}:$
\[
0 =  \sum_{k=1}^{N} \left[  \frac{\partial }{\partial x_{k}} \left(  \frac{  f_{x_{k}}  }{ {\mathcal{W}} }  \right) +  \frac{\partial }{\partial y_{k}} \left(   \frac{  f_{y_{k}}  }{    {\mathcal{W}} }   \right) \right], \;\;  {\mathcal{W}}=
 \sqrt{  1 +  \sum_{k=1}^{N}  \left( {f_{x_{k}}}^{2} +  {f_{y_{k}}}^{2}  \right)\, }.
\]
\end{remark}

\begin{remark}
There are at least two geometric proofs of the minimality of the classical helicoids in  ${\mathbb{R}}^{3}$, which exploits symmetries of helicoids. For instance, see \cite[Section 2.2]{FT1991} and Karsten's lecture note \cite[Section 2.2]{KGBlec}. Interested readers may also try to give new proofs of the minimality of the Choe-Hoppe helicoid, which extend such geometric arguments.  
\end{remark}

 \newpage

\begin{example}[\textbf{Barbosa-Dajczer-Jorge's helicoids  \cite{BDJ1984}}]    \label{E2}
The case $N=0$ in Theorem  \ref{gHEL1} recovers Barbosa-Dajczer-Jorge's ruled minimal submanifolds \cite{BDJ1984}. More explicitly, given an $(L+1)$-tuple  ${\Lambda}= \left( {\lambda}_{0}, {\lambda}_{1}, \cdots, {\lambda}_{L} \right)$,
by sweeping out an $L$-dimensional plane, we have a minimal submanifold 
\[
{\mathcal{H}}_{\Lambda}  = \left\{ \; 
\begin{bmatrix}
 \; r_{1} \; \cos \left( \lambda_{1} \Theta   \right)   \;  \\ 
 \; r_{1} \;  \sin \left( \lambda_{1} \Theta   \right)   \;  \\ 
\vdots \\
 \;  r_{L} \; \cos \left( \lambda_{L} \Theta   \right)   \;  \\ 
 \; r_{L} \;  \sin \left( \lambda_{L} \Theta   \right)   \;  \\ 
  \; \quad \quad \;      {\lambda_{0}} \, \Theta \\
\end{bmatrix}  \in   {\mathbb{R}}^{2L+1}  \; \vert \;    \Theta, r_{1}, \cdots, r_{L} \in \mathbb{R} \;
\right\}.
\]
Bryant \cite{B1991} proved that it is an austere submanifold, which means that, 
for any normal vector, the set of eigenvalues of its induced shape operator is invariant under the multiplication by $-1$.
These submanifolds can be characterized by two uniqueness results. See  \cite[Theorem 3.10]{BDJ1984} and \cite[Theorem 3.1]{B1991}. Notice that, for $\lambda_{0}=0$, they become minimal cones in  ${\mathbb{R}}^{2L}$. 
\end{example}

\begin{example}[\textbf{Minimal submanifolds in the unit sphere ${\mathbb{S}}^{L(2N+2)-1}$}]    \label{E3}
Taking ${\lambda}_{0}=0$ in Theorem  \ref{gHEL1}, we obtain a minimal cone in ${\mathbb{R}}^{L(2N+2)}$ given by
\[
   {\mathcal{H}}^{\lambda_{1}, \cdots, \lambda_{L}, \mathbf{C}} = \left\{ \; \begin{bmatrix}
\, r_{1} e^{i( \lambda_{1} \Theta )} \left( {X_{1}} + i  {Y_{1}}   \right) \\ 
\vdots \\
\, r_{L} e^{i( \lambda_{L} \Theta )} \left( {X_{L}} + i  {Y_{L}}   \right) \\ 
\end{bmatrix}  \, \vert \;  \Theta \in \mathbb{R},  r_{t} \in \mathbb{R}, \begin{bmatrix}
X_t \\ 
Y_t
\end{bmatrix} \in {\mathcal{C}}_{t},  1 \leq t \leq L \right\},
\]
where  $\mathbf{C}=\left\{ {\mathcal{C}}_{1}, \cdots, {\mathcal{C}}_{L} \right\}$ denotes a collection of $L$ independent $2N$-dimensional Clifford tori in ${\mathbb{S}}^{2N+1} \subset  {\mathbb{R}}^{2N+2}$. The fact that $ {\mathcal{H}}^{\lambda_{1}, \cdots, \lambda_{L}, \mathbf{C}}$ is a minimal cone in  ${\mathbb{R}}^{L(2N+2)}$ guarantees that the intersection
$\Sigma = {\mathcal{H}}^{\lambda_{1}, \cdots, \lambda_{L}, \mathbf{C}} \cap {\mathbb{S}}^{L(2N+2)-1}$
becomes a minimal submanifold in the unit sphere ${\mathbb{S}}^{L(2N+2)-1} \subset {\mathbb{R}}^{L(2N+2)}$. More explicitly, 
we have 
 \[
   \Sigma= \left\{ \; \begin{bmatrix}
\, p_{1} e^{i( \lambda_{1} \Theta )} \left( {X_{1}} + i  {Y_{1}}   \right) \\ 
\vdots \\
\, p_{L} e^{i( \lambda_{L} \Theta )} \left( {X_{L}} + i  {Y_{L}}   \right) \\ 
\end{bmatrix}  \, \vert \;  \Theta \in \mathbb{R},  \sum_{t=1}^{L}  {p_{t}}^{2}=1,  \begin{bmatrix}
X_t \\ 
Y_t
\end{bmatrix} \in {\mathcal{C}}_{t},  1 \leq t \leq L \right\}.
\]
In the particular case when $(L,N)=(2,0)$, we recover the family of Lawson's ruled minimal surfaces \cite[Section 7]{Lawson1970}  in ${\mathbb{S}}^{3} \subset {\mathbb{R}}^{4}$:
\[
   \Sigma= \left\{ \; \begin{bmatrix}
\, \cos t \cos  ( \lambda_{1} \Theta )    \\  
\, \cos t \sin  ( \lambda_{1} \Theta )    \\  
\,  \sin t \cos  ( \lambda_{2} \Theta )   \\  
\,   \sin t  \sin  ( \lambda_{2} \Theta )  \\  
\end{bmatrix}  \in  {\mathbb{R}}^{4}   \, \vert \;  t, \Theta \in \mathbb{R} \; \right\},
\]
for any pair  $( {\lambda}_{1}, {\lambda}_{2} ) \neq (0,0)$ of real constants. 
\end{example}

\newpage

\begin{example}[\textbf{Harvey-Lawson's volume-minimizing cone in ${\mathbb{R}}^{4N+4}$ \cite{HL1982}}]    \label{E4}
 Corollary \ref{minimal multi-lays clifford cones} with $L=2$ or Theorem  \ref{gHEL2} with ${\lambda}_{0}={\lambda}=0$ and $L=2$  recover Harvey-Lawson's twisted normal 
cone \cite[Example 3.22]{HL1982} over the Clifford torus  $ \frac{1}{\sqrt{2}}  {\mathbb{S}}^{N} \times   \frac{1}{\sqrt{2}}  {\mathbb{S}}^{N} \subset {\mathbb{S}}^{2N+1}$. According to \cite[Theorem 3.17]{HL1982}, the austerity of the Clifford torus in ${\mathbb{S}}^{2N+1}$ guarantees that the  
cone  
\[
{\Sigma}^{2N+2} = \left\{ \; 
\begin{bmatrix}
  \, r_{1} X   \, \\ 
  \, r_{1} Y  \,   \\ 
  \, r_{2} X    \,  \\ 
  \, r_{2} Y  \,   \\ 
\end{bmatrix}  \in   {\mathbb{R}}^{4N+4}  \; \vert  \;  r_{1}, r_{2} \in \mathbb{R}, \;   {\Vert X \Vert}_{  {\mathbb{R}}^{N+1} } =1, \; {\Vert Y \Vert}_{  {\mathbb{R}}^{N+1} } =1   \;
\right\}
\]
is homologically volume minimizing.
\end{example}

\section{Proof of main results}

We present details of the 
proof of  Theorem  \ref{gHEL1}, which exploits five identities in Lemma \ref{magic}. Since the proof of Theorem  \ref{gHEL2} is similar, we shall omit it.  

Our aim is to show that the generalized helicoid ${\mathcal{H}}^{\Lambda, \mathbf{C}}$ is minimal in ${\mathbb{R}}^{L(2N+2)+1}$. 
In the particular case when $\left(  {\lambda}_{0},  {\lambda}_{1}, \cdots,  {\lambda}_{L}  \right)=\left(0, \cdots, 0 \right)$, it becomes the 
product of $L$ independent Clifford cones. From now on, we assume that $\left(  {\lambda}_{0},  {\lambda}_{1}, \cdots,  {\lambda}_{L}  \right) \neq \left(0, \cdots, 0 \right)$.

For each index $s \in \left\{ 1, \cdots, L \right\}$, let $C^{s} \left( u^{s}_{1}, \cdots, u^{s}_{2N} \right) $ denote a local chart of the Clifford tori $\frac{1}{\sqrt{2}}  {\mathbb{S}}^{N} \times   \frac{1}{\sqrt{2}}  {\mathbb{S}}^{N} $  in ${\mathbb{S}}^{2N+1} \subset {\mathbb{R}}^{2N+2}={\mathbb{C}}^{N+1}$. These induce a local 
patch $\mathbf{F}$ of the generalized helicoid ${\mathcal{H}}^{\Lambda, \mathbf{C}} \subset {\mathbb{R}}^{L(2N+2)+1}$ 
\[
\mathbf{F} \left( u^{1}_{1}, \cdots, u^{1}_{2N}, \cdots, u^{L}_{1}, \cdots, u^{L}_{2N}, \Theta, r_{1}, \cdots, r_{L}  \right)
= 
\begin{bmatrix}
\, r_{1} e^{i( \lambda_{1} \Theta )}  C^{1} \left( u^{1}_{1}, \cdots, u^{1}_{2N} \right) \\ 
\vdots \\
\, r_{L} e^{i( \lambda_{L} \Theta )}  C^{L} \left( u^{L}_{1}, \cdots, u^{L}_{2N} \right) \\ 
  {\lambda_{0}} \Theta
\end{bmatrix}.
\]

We will show that the mean curvature vector ${\triangle}_{ G_{{\mathcal{H}}^{\Lambda, \mathbf{C}}} } \mathbf{F}$ vanishes.
Here, ${\triangle}_{ G_{{\mathcal{H}}^{\Lambda, \mathbf{C}}} }$ denote the Laplace-Beltrami operator on ${\mathcal{H}}^{\Lambda, \mathbf{C}}$ induced by the patch $\mathbf{F}$ of the generalized helicoid ${\mathcal{H}}^{\Lambda, \mathbf{C}}$. More explicitly, we need to prove equalities 
\begin{enumerate}
\item[\textbf{(a)}]
\[
 {\triangle}_{ G_{{\mathcal{H}}^{\Lambda, \mathbf{C}}} }  \left(  {\lambda_{0}} \Theta  \right) \equiv 0.
\]
\item[\textbf{(b)}] 
\[
 {\triangle}_{ G_{{\mathcal{H}}^{\Lambda, \mathbf{C}}} }    \left(  r_{t} e^{i( \lambda_{t} \Theta )}  C^{t}   \right) \equiv 0, \quad t \in \left\{ 1, \cdots, L \right\}.
\]
\end{enumerate}
 
\textbf{Step A.} Let ${\left( g^{s}_{ij} \right) }_{ 1 \leq i, j \leq 2N }$ denote the matrix of the 
first fundamental form  induced by the 
patch $C^{s} \left( u^{s}_{1}, \cdots, u^{s}_{2N} \right)$ of the Clifford torus $\frac{1}{\sqrt{2}}  {\mathbb{S}}^{N} \times   \frac{1}{\sqrt{2}}  {\mathbb{S}}^{N}$ lying in ${\mathbb{S}}^{2N+1} \subset {\mathbb{R}}^{2N+2}$. We adopt the notation
\[
{ \mathbf{g}}^{s} := \mathrm{det} \; { \left({g}^{s}_{ij}  \right) }_{1 \leq i, j \leq 2N}.
\]
Then, the induced metric  $G_{{}_{{\mathcal{H}}^{\Lambda, \mathbf{C}}} }$ of  ${\mathcal{H}}^{\Lambda, \mathbf{C}}$ in coordinates $u^{1}_{1}$, $\cdots$, $u^{1}_{2N}$, $\cdots$, $u^{L}_{1}$, $\cdots$, $u^{L}_{2N}$, $\Theta$, $r_{1}$, $\cdots$, $r_{L}$ reads 
\[
        G_{{}_{{\mathcal{H}}^{\Lambda, \mathbf{C}}} } 
 =\;  \sum_{s=1}^{L}  \sum_{1 \leq i,j \leq 2N}   {r_{s}}^{2}  g^{s}_{ij} \, du^{s}_{i} du^{s}_{j}  + 2
        \sum_{s=1}^{L}  \sum_{i=1}^{2N}  {{\lambda}_{s}} {r_{s}}^{2}  w^{s}_{i}  d\Theta du^{s}_{i}  
   +  \,\mathcal{R}  \; {d\Theta}^{2}  + \;  \sum_{t=1}^{L}  {dr_{s}}^{2},  
\]
where we define 
\begin{equation} \label{Rmeans}
\mathcal{R} =  {\lambda_{1}}^{2} {r_{1}}^{2} + \cdots + {\lambda_{L}}^{2} {r_{L}}^{2} + {\lambda_{0}}^{2}>0,  
\end{equation}
and 
\[
w^{s}_{i}=   \frac{\partial C^{s}_{i} }{\partial u^{s}_i} \cdot \mathbf{J} C^{s}_{i}, \quad s \in  \left\{ 1, \cdots, L \right\}, \,
i \in \left\{ 1, \cdots, 2N \right\}.
\]
By using the cofactor expansion of determinant  or the Laplace formula, we compute the determinant
\begin{equation} \label{Gmeans}
 \mathbf{G}:=\mathrm{det}  \left( G_{{}_{{\mathcal{H}}^{\Lambda, \mathbf{C}}} }  \right) 
= \, \mathcal{P}  \;  { \left( r_{1}\cdots r_{L} \right) }^{4N}   \prod_{s=1}^{L} { \mathbf{g}}^{s}, 
\end{equation}
where we have, by \textbf{(b)} of Lemma \ref{magic}, 
\begin{equation} \label{Pmeans}
\mathcal{P}  := \mathcal{R} - \sum_{1 \leq s \leq L} \sum_{1 \leq i,j \leq 2N}   {  {\lambda}_{s} }^{2} {r_{s}}^{2}   {\left( {g}^{s} \right)}^{ij} w^{s}_{i} w^{s}_{j}
=    {\lambda_{0}}^{2} + \sum_{s=1}^{L}    {  {\lambda}_{s} }^{2} {r_{s}}^{2}   \left( D^s  \cdot \mathbf{J} C^{s} \right)^{2} >0. 
\end{equation}
From now on, we work on the points when $\mathbf{G}=\mathrm{det}  \left( G_{{}_{{\mathcal{H}}^{\Lambda, \mathbf{C}}} }  \right) $ does not vanish,
 or equivalently, when none of $r_{1}, \cdots, r_{L}$ vanishes. Write 
\begin{equation} \label{Dmeans}
 d^{s}_{i} = \sum_{j=1}^{2N} {\lambda}_{s}  {\left( {g}^{s} \right)}^{ij} w^{s}_{j}, \quad s \in  \left\{ 1, \cdots, L \right\}, \,
i \in \left\{ 1, \cdots, 2N \right\}.
\end{equation}
Then, the components of $ { \left( G_{{}_{{\mathcal{H}}^{\Lambda, \mathbf{C}}} } \right) }^{-1}$  in the local coordinates $u^{1}_{1}$, $\cdots$, $u^{1}_{2N}$, $\cdots$, $u^{L}_{1}$, $\cdots$, $u^{L}_{2N}$, $\Theta$, $r_{1}$, $\cdots$, $r_{L}$ reads: 
 \[
 G^{ u^{s}_{i}  u^{s}_{j} }  =  \frac{  \,  {\left( {g}^{s} \right)}^{ij} \,  }{ {r_{s}}^{2}  }  +   \frac{ \, d^{s}_{i} \, d^{s}_{j} \, }{ \mathcal{P} }, \quad 
 G^{ u^{s}_{i}  \Theta }  = G^{  \Theta u^{s}_{i}  }    =  \frac{ \, -  d^{s}_{i}  \, }{ \mathcal{P} }, \quad 
 G^{ \Theta  \Theta }  = \frac{1}{ \mathcal{P}  }, \quad 
 G^{ r_{t} r_{t} }  = 1.
\] 

The other components of $ { \left( G_{{}_{{\mathcal{H}}^{\Lambda, \mathbf{C}}} } \right) }^{-1}$ are all zero. Finally, we find the induced Laplace-Beltrami operator with respect to the metric  $G_{{}_{{\mathcal{H}}^{\Lambda, \mathbf{C}}} }.$

\begin{eqnarray*}
 {\triangle}_{ G_{{\mathcal{H}}^{\Lambda, \mathbf{C}}} }
&=& \;\;\;   \frac{1}{\sqrt{\mathbf{G}}}\sum_{s=1}^{L}  \sum_{1 \leq i,j \leq 2N}    \frac{\partial}{\partial  u^{s}_{i} } 
\left(  \sqrt{\mathbf{G}} \,  \frac{  \,  {\left( {g}^{s} \right)}^{ij} \,  }{ {r_{s}}^{2}  }   \;  \frac{\partial}{\partial  u^{s}_{j} } 
  \;\; \right)  \\
&&  + \frac{1}{\sqrt{\mathbf{G}}}   \sum_{s=1}^{L} \sum_{1 \leq i,j \leq 2N}    \frac{\partial}{\partial  u^{s}_{i} } 
\left(  \sqrt{\mathbf{G}}  \, \frac{ \, d^{s}_{i} \, d^{s}_{j} \, }{ \mathcal{P} }   \;  \frac{\partial}{\partial  u^{s}_{j} } 
  \;\; \right) \\
&&  + \frac{1}{\sqrt{\mathbf{G}}}  \sum_{s=1}^{L} \sum_{i=1}^{2N}    \frac{\partial}{\partial  u^{s}_{i} }  
\left(    \sqrt{\mathbf{G}} \,  \frac{ \, -  d^{s}_{i}  \, }{ \mathcal{P} }   \frac{\partial}{\partial  \Theta }  \;\;   \right)   \\
&&  + \frac{1}{\sqrt{\mathbf{G}}}  \sum_{s=1}^{L} \sum_{i=1}^{2N}    \frac{\partial}{\partial  \Theta }
\left(    \sqrt{\mathbf{G}} \,  \frac{ \, -  d^{s}_{i}  \, }{ \mathcal{P} }    \frac{\partial}{\partial  u^{s}_{i} }   \;\;   \right)   \\
&& +\frac{1}{\sqrt{\mathbf{G}}}  \left(    \sqrt{\mathbf{G}}  \;  \frac{1}{  \mathcal{P}   }     \frac{\partial}{\partial  \Theta }  \;\; \right)   \\
&& + \frac{1}{\sqrt{\mathbf{G}}}     \sum_{s=1}^{L}      \frac{\partial}{\partial  r^{s} }  \left( \sqrt{\mathbf{G}} 
  \frac{\partial}{\partial  r^{s} }  \;\;    \right).
\end{eqnarray*}

\textbf{Step B.}  We next show that 
\[
   {\triangle}_{ G_{{\mathcal{H}}^{\Lambda, \mathbf{C}}} } \Theta \equiv 0,
\]
which implies that the last coordinate in ${\mathbb{R}}^{L(2N+2)+1}$ is harmonic on the generalized helicoid ${\mathcal{H}}^{\Lambda, \mathbf{C}}$. 
According to the formula for ${\triangle}_{ G_{{\mathcal{H}}^{\Lambda, \mathbf{C}}} }$ deduced in \textbf{Step A}, it reduces to prove the equality
\[
    \sum_{s=1}^{L} \sum_{i=1}^{2N}    \frac{\partial}{\partial  u^{s}_{i} }  
\left(    \sqrt{\mathbf{G}} \,  \frac{ \,  d^{s}_{i}  \, }{ \mathcal{P} }     \right) = 0.
\]
We claim that, for each fixed $s \in \left\{1, \cdots, L \right\}$, 
\begin{equation} \label{sum02}
\sum_{i=1}^{2N}    \frac{\partial}{\partial  u^{s}_{i} }  
\left(    \sqrt{\mathbf{G}} \,  \frac{ \,  d^{s}_{i}  \, }{ \mathcal{P} }     \right) = 0. 
\end{equation}
According to the equality
\[
 \sqrt{ \mathbf{G} } = \sqrt{  \mathcal{P} }  \;  { \left( r_{1}\cdots r_{L} \right) }^{2N}  \prod_{s=1}^{L}    \sqrt{  { \mathbf{g}}^{s} \; }, 
\]
and the definition (\ref{Dmeans})
\[
d^{s}_{i} = \sum_{j=1}^{2N} {\lambda}_{s}  {\left( {g}^{s} \right)}^{ij} w^{s}_{j},
\]
it is sufficient to check the identity
\begin{equation} \label{sum03}
 \sum_{1 \leq i,j \leq 2N}      \frac{\partial}{\partial  u^{s}_{i} }  
\left(  \frac{  1 }{   \sqrt{ \mathcal{P} \, } }  \;  \sqrt{  { \mathbf{g}}^{s} \, }  {\left( {g}^{s} \right)}^{ij} w^{s}_{j}   \right) = 0. 
\end{equation}
or equivalently,
\[
\frac{1}{ \sqrt{ \mathcal{P} } }  \sum_{1 \leq i,j \leq 2N}   \frac{\partial}{\partial u^{s}_{i}} \left(   \sqrt{  {\mathbf{g}}^{s} \, } \;   {\left( {g}^{s} \right)}^{ij}  w^{s}_{j} \right) +    \sum_{1 \leq i,j \leq 2N}   \sqrt{  {\mathbf{g}}^{s} \, }     {\left( {g}^{s} \right)}^{ij}   w^{s}_{j}     \frac{\partial   }{\partial  u^{s}_{i} }  \left(  \frac{1}{  \sqrt{ \mathcal{P}} } \right)      = 0.
\]
The identity \textbf{(c)} of Lemma \ref{magic} guarantees that the first sum vanishes. To prove that the second sum vanishes, we are required to show
\[
 \sum_{1 \leq i,j \leq 2N}      {\left( {g}^{s} \right)}^{ij}   w^{s}_{j}     \frac{\partial  \mathcal{P}  }{\partial  u^{s}_{i} }  = 0. 
\]
From the definition 
\[
\mathcal{P}  = \mathcal{R} - \sum_{1 \leq s \leq L} \sum_{1 \leq i,j \leq 2N}   {  {\lambda}_{s} }^{2} {r_{s}}^{2}   {\left( {g}^{s} \right)}^{ij} w^{s}_{i} w^{s}_{j},
\]
and the identity \textbf{(b)} of Lemma \ref{magic},
we have 
\[
   \frac{\partial  \mathcal{P}  }{\partial  u^{s}_{i} }  =  -   {  {\lambda}_{s} }^{2} {r_{s}}^{2}  
   \frac{\partial   }{\partial  u^{s}_{i} }  \left( \sum_{1 \leq i,j \leq 2N}    {\left( {g}^{s} \right)}^{ij} w^{s}_{i} w^{s}_{j}   \right)
 =  -   {  {\lambda}_{s} }^{2} {r_{s}}^{2}  
   \frac{\partial   }{\partial  u^{s}_{i} }  \left(           1 - \left(  {  D^s  \cdot \mathbf{J} C^{s}   }   \right)^{2}   \right).
\]
We thus need to prove 
\[
\sum_{1 \leq i,j \leq 2N}     {\left( {g}^{s} \right)}^{ij}   w^{s}_{j}    \frac{\partial   }{\partial  u^{s}_{i} }  \left(            1 - \left(  D^s  \cdot \mathbf{J} C^{s}  \right)^{2} \right) = 0.
\]
So, it is enough to obtain
\[
\sum_{1 \leq i,j \leq 2N}      {\left( {g}^{s} \right)}^{ij}   w^{s}_{j}    \frac{\partial   }{\partial  u^{s}_{i} }  \left(  D^s  \cdot \mathbf{J} C^{s}    \right) = 0.
\]
However, because of the identity \textbf{(d)} of Lemma \ref{magic},  this sum vanishes.
\textbf{Step C.} It now remains to prove that, for each index $t \in \left\{ 1, \cdots, L \right\}$, 
\[
 {\triangle}_{ G_{{\mathcal{H}}^{\Lambda, \mathbf{C}}} }    \left(  r_{t} e^{i( \lambda_{t} \Theta )}  C^{t}   \right) \equiv 0. 
\]
  According to the formula for ${\triangle}_{ G_{{\mathcal{H}}^{\Lambda, \mathbf{C}}} }$ deduced in \textbf{Step A}, it reduces to prove the equality
\begin{eqnarray*}
 0
&=& \;\;\;   \sum_{s=1}^{L}  \sum_{1 \leq i,j \leq 2N}    \frac{\partial}{\partial  u^{s}_{i} } 
\left(  \sqrt{\mathbf{G}} \,  \frac{  \,  {\left( {g}^{s} \right)}^{ij} \,  }{ {r_{s}}^{2}  }   \;  \frac{\partial}{\partial  u^{s}_{j}} 
  \;   \left(  r_{t} e^{i( \lambda_{t} \Theta )}  C^{t}   \right) \; \right)  \\
&&  +    \sum_{s=1}^{L} \sum_{1 \leq i,j \leq 2N}    \frac{\partial}{\partial  u^{s}_{i} } 
\left(  \sqrt{\mathbf{G}}  \, \frac{ \, d^{s}_{i} \, d^{s}_{j} \, }{ \mathcal{P} }   \;  \frac{\partial}{\partial  u^{s}_{j} } 
 \;   \left(  r_{t} e^{i( \lambda_{t} \Theta )}  C^{t}   \right) \;  \right) \\
&&  +    \sum_{s=1}^{L} \sum_{i=1}^{2N}    \frac{\partial}{\partial  u^{s}_{i} }  
\left(    \sqrt{\mathbf{G}} \,  \frac{ \, -  d^{s}_{i}  \, }{ \mathcal{P} }   \frac{\partial}{\partial  \Theta }  \;   \left(  r_{t} e^{i( \lambda_{t} \Theta )}  C^{t}   \right) \;    \right)   \\
&&  +    \sum_{s=1}^{L} \sum_{i=1}^{2N}    \frac{\partial}{\partial  \Theta }
\left(    \sqrt{\mathbf{G}} \,  \frac{ \, -  d^{s}_{i}  \, }{ \mathcal{P} }    \frac{\partial}{\partial  u^{s}_{i} }  \;   \left(  r_{t} e^{i( \lambda_{t} \Theta )}  C^{t}   \right) \;   \right)   \\
&& +   \left(    \sqrt{\mathbf{G}}  \;  \frac{1}{  \mathcal{P}   }     \frac{\partial}{\partial  \Theta } \;   \left(  r_{t} e^{i( \lambda_{t} \Theta )}  C^{t}   \right) \;  \right)   \\
&& +   \sum_{s=1}^{L}      \frac{\partial}{\partial  r^{s} }  \left( \sqrt{\mathbf{G}} 
  \frac{\partial}{\partial  r^{s} } \;   \left(  r_{t} e^{i( \lambda_{t} \Theta )}  C^{t}   \right) \;    \right).
\end{eqnarray*}
We express this equality as the sum
\[
 0 = { \mathbf{S} }_{1}  +{ \mathbf{S} }_{2}  +{ \mathbf{S} }_{3}  +{ \mathbf{S} }_{4}  +{ \mathbf{S} }_{5}  +{ \mathbf{S} }_{6}.
\]
\textbf{Step C1.} 
 We recall that $\mathbf{G} = \, \mathcal{P}  \;  { \left( r_{1}\cdots r_{L} \right) }^{4N}   \prod_{s=1}^{L} { \mathbf{g}}^{s}$. 
We introduce
\[
     Q_{s} = \sqrt{ \; \prod_{s \in \{1, \cdots, L \} - \{ \alpha \} }  \left(   \; {r_{s}}^{4N}  { \mathbf{g}}^{s} \; \right) \; }, \quad s \in  \{1, \cdots, L \}
\]
to get the factorization 
\begin{equation} \label{FAC}
\sqrt{\mathbf{G}} = \sqrt{ \mathcal{P}} \; {r_{s}}^{2N} \sqrt{ { \mathbf{g}}^{s} \, } Q_{s}, \quad s \in  \{1, \cdots, L \}.
\end{equation}
We evaluate the sum  $ { \mathbf{S} }_{1}$. 
\begin{eqnarray*}
 { \mathbf{S} }_{1} &=& \sum_{s=1}^{L}  \sum_{1 \leq i,j \leq 2N}    \frac{\partial}{\partial  u^{s}_{i} } 
\left(  \sqrt{\mathbf{G}} \,  \frac{  \,  {\left( {g}^{s} \right)}^{ij} \,  }{ {r_{s}}^{2}  }   \;  \frac{\partial}{\partial  u^{s}_{j}} 
  \;   \left(  r_{t} e^{i( \lambda_{t} \Theta )}  C^{t}   \right) \; \right) \\
 &=& \;\;\; \quad  \sum_{1 \leq i,j \leq 2N}    \frac{\partial}{\partial  u^{t}_{i} } 
\left(  \sqrt{\mathbf{G}} \,  \frac{  \,  {\left( {g}^{t} \right)}^{ij} \,  }{ {r_{t}}^{2}  }   \;  \frac{\partial}{\partial  u^{t}_{j}} 
  \;   \left(  r_{t} e^{i( \lambda_{t} \Theta )}  C^{t}   \right) \; \right) \\
 &=& \frac{1}{r_{t}} e^{i( \lambda_{t} \Theta )}  \sum_{1 \leq i,j \leq 2N}    \frac{\partial}{\partial  u^{t}_{i} } 
\left(  \sqrt{\mathbf{G}} \,  {\left( {g}^{t} \right)}^{ij} \,    \;  \frac{\partial  C^{t} }{\partial  u^{t}_{j}} 
  \; \right) \\
 &=&   {r_{t}}^{2N-1}  Q_{t}  \; e^{i( \lambda_{t} \Theta )}  \sum_{1 \leq i,j \leq 2N}    \frac{\partial}{\partial  u^{t}_{i} } 
\left(  \sqrt{ \mathcal{P}} \,  \sqrt{ { \mathbf{g}}^{t} \, }  \,  {\left( {g}^{t} \right)}^{ij} \,    \;  \frac{\partial  C^{t} }{\partial  u^{t}_{j}} 
  \; \right)   \\  
 &=&  \quad {r_{t}}^{2N-1}  Q_{t} \sqrt{ \mathcal{P}}  \; e^{i( \lambda_{t} \Theta )}  \sum_{1 \leq i,j \leq 2N}    \frac{\partial}{\partial  u^{t}_{i} } 
\left(   \sqrt{ { \mathbf{g}}^{t} \, }  \,  {\left( {g}^{t} \right)}^{ij} \,    \;  \frac{\partial  C^{t} }{\partial  u^{t}_{j}} 
  \; \right)     \\  
 && + \;  {r_{t}}^{2N-1}  Q_{t}  \; e^{i( \lambda_{t} \Theta )}  \sum_{1 \leq i,j \leq 2N}  
    \,  \sqrt{ { \mathbf{g}}^{t} \, }  \,  {\left( {g}^{t} \right)}^{ij} \,    \frac{\partial \sqrt{ \mathcal{P}}}{\partial  u^{t}_{i} }  \;  \frac{\partial  C^{t} }{\partial  u^{t}_{j}}.
\end{eqnarray*}
We make two observations.
First, as in the proof of the identity in \textbf{(c)} of Lemma \ref{magic},  by using the minimality of the Clifford torus
$\frac{1}{\sqrt{2}}  {\mathbb{S}}^{N} \times   \frac{1}{\sqrt{2}}  {\mathbb{S}}^{N} $  in the unit hypersphere ${\mathbb{S}}^{2N+1} \subset {\mathbb{R}}^{2N+2}$, we can simplify the sum in the first term:
\[
  \sum_{1 \leq i,j \leq 2N}    \frac{\partial}{\partial  u^{t}_{i} } 
\left(   \sqrt{ { \mathbf{g}}^{t} \, }  \,  {\left( {g}^{t} \right)}^{ij} \,    \;  \frac{\partial  C^{t} }{\partial  u^{t}_{j}} \right) =
  \sqrt{ { \mathbf{g}}^{t} \, } \;  {\triangle}_{g^t}  C^t = -  2N  \sqrt{ { \mathbf{g}}^{t} \, }  C^{t},
\]
Second, from the definition (\ref{Pmeans}) and \textbf{(b)} of Lemma \ref{magic}, we have
\begin{eqnarray*}
   \frac{\partial \sqrt{ \mathcal{P} }}{\partial  u^{t}_{i} }  
 &=&  \frac{1}{2 \sqrt{   \mathcal{P}   }} \;  \frac{\partial }{\partial  u^{t}_{i} }  \left(    \mathcal{R} - \sum_{1 \leq s \leq L} \sum_{1 \leq i,j \leq 2N}   {  {\lambda}_{s} }^{2} {r_{s}}^{2}   {\left( {g}^{s} \right)}^{ij} w^{s}_{i} w^{s}_{j}   \right) \\  
&=& -  \frac{ {  {\lambda}_{t} }^{2} {r_{t}}^{2}   }{2 \sqrt{   \mathcal{P}   }}  \; \frac{\partial }{\partial  u^{t}_{i} }  \left( 
 1 - \left(  {  D^t \cdot \mathbf{J}  C^t    }   \right)^{2}    \right) \\  
 &=&   \frac{ {  {\lambda}_{t} }^{2} {r_{t}}^{2}   }{  \sqrt{   \mathcal{P}   }}   \left(    D^t \cdot \mathbf{J}  C^t      \right)     \; \frac{\partial }{\partial  u^{t}_{i} }   \left(    D^t \cdot \mathbf{J}  C^t      \right),   
\end{eqnarray*}
and then, by \textbf{(e)} of Lemma \ref{magic}, 
 \begin{eqnarray*}
&&   \sum_{1 \leq i,j \leq 2N}  
    \,  \sqrt{ { \mathbf{g}}^{t} \, }  \,  {\left( {g}^{t} \right)}^{ij} \,    \frac{\partial \sqrt{ \mathcal{P}}}{\partial  u^{t}_{i} }  \;  \frac{\partial  C^{t} }{\partial  u^{t}_{j}}   \\    
&=&   {  {\lambda}_{t} }^{2} {r_{t}}^{2}    \frac{  \sqrt{ { \mathbf{g}}^{t} \, }  }{  \sqrt{   \mathcal{P}   }}   \left(    D^t \cdot \mathbf{J}  C^t      \right)    \sum_{1 \leq i,j \leq 2N}   \,  {\left( {g}^{t} \right)}^{ij} \,    \frac{\partial }{\partial  u^{t}_{i} }   \left(    D^t \cdot \mathbf{J}  C^t      \right) \;  \frac{\partial  C^{t} }{\partial  u^{t}_{j}}.  \\   
&=&   - 2 {  {\lambda}_{t} }^{2} {r_{t}}^{2}    \frac{  \sqrt{ { \mathbf{g}}^{t} \, }  }{  \sqrt{   \mathcal{P}   }}   \left(    D^t \cdot \mathbf{J}  C^t      \right)    \left[     \mathbf{J} D^t +  \left( D^t \cdot \mathbf{J} C^t \right) \, C^t \,    \right].
\end{eqnarray*}
It follows that
\begin{eqnarray*}
 { \mathbf{S} }_{1}
 &=&    -  \; 2N \,  {r_{t}}^{2N-1}  Q_{t} \sqrt{ \mathcal{P}}  \sqrt{ { \mathbf{g}}^{t} \, }   \; e^{i( \lambda_{t} \Theta )}    C^{t}       \\  
 && - \; 2 {  {\lambda}_{t} }^{2}   {r_{t}}^{2N+1}  Q_{t}  \frac{  \sqrt{ { \mathbf{g}}^{t} \, }  }{  \sqrt{   \mathcal{P}   }}    \left(    D^t \cdot \mathbf{J}  C^t      \right)  \; e^{i( \lambda_{t} \Theta )}   \left[     \mathbf{J} D^t +  \left( D^t \cdot \mathbf{J} C^t \right) \, C^t \,    \right].
\end{eqnarray*}
\textbf{Step C2.} We will use the factorization (\ref{FAC}) obtained in \textbf{Step C1}: 
\[
\sqrt{\mathbf{G}} = \sqrt{ \mathcal{P}} \; {r_{t}}^{2N} \sqrt{ { \mathbf{g}}^{t} \, } Q_{t}. 
\]
We expand the sum  $ { \mathbf{S} }_{2}$:
\begin{eqnarray*}
  { \mathbf{S} }_{2} &=&  \sum_{s=1}^{L} \sum_{1 \leq i,j \leq 2N}    \frac{\partial}{\partial  u^{s}_{i} } 
\left(  \sqrt{\mathbf{G}}  \, \frac{ \, d^{s}_{i} \, d^{s}_{j} \, }{ \mathcal{P} }   \;  \frac{\partial}{\partial  u^{s}_{j} } 
 \;   \left(  r_{t} e^{i( \lambda_{t} \Theta )}  C^{t}   \right) \;  \right) \\
 &=& \;\;\; \quad \sum_{1 \leq i,j \leq 2N}    \frac{\partial}{\partial  u^{t}_{i} } 
\left(  \sqrt{\mathbf{G}}  \, \frac{ \, d^{t}_{i} \, d^{t}_{j} \, }{ \mathcal{P} }   \;  \frac{\partial}{\partial  u^{t}_{j} } 
 \;   \left(  r_{t} e^{i( \lambda_{t} \Theta )}  C^{t}   \right) \;  \right) \\
 &=& {r_{t}}^{2N+1} Q_{t} \; e^{i( \lambda_{t} \Theta )}   \sum_{i=1}^{2N}    \frac{\partial}{\partial  u^{t}_{i} } 
\left[    \,  \frac{ \sqrt{ { \mathbf{g}}^{t} \, }  }{ \sqrt{\mathcal{P}} } d^{t}_{i} \; \left(  \sum_{j=1}^{2N}  d^{t}_{j} \,   \;  \frac{\partial   C^{t}  }{\partial  u^{t}_{j} }  \right) \;  \right].  
\end{eqnarray*}
From the first identity in \textbf{(a)} of Lemma \ref{magic} and the definition (\ref{Dmeans})
\[
d^{t}_{j} = \sum_{k=1}^{2N} {\lambda}_{t}  {\left( {g}^{t} \right)}^{jk} w^{t}_{k},
\]
we compute the inner sum:
\[
 \sum_{j=1}^{2N}  d^{t}_{j} \,   \;  \frac{\partial   C^{t}  }{\partial  u^{t}_{j} } 
=  \lambda_{t} \sum_{1 \leq j,k \leq 2N}      {\left( {g}^{t} \right)}^{jk} w^{t}_{k}  \frac{\partial C^{t} }{\partial  u^{t}_{j} } 
=  \lambda_{t} \left[  \mathbf{J} C^{t}  - \left(D^{t} \cdot \mathbf{J} C^{t} \right) \; D^{t} \right].    
\]
It follows that 
\begin{eqnarray*}
  { \mathbf{S} }_{2} 
 &=& \; \lambda_{t} {r_{t}}^{2N+1} Q_{t} \; e^{i( \lambda_{t} \Theta )}   \sum_{i=1}^{2N}    \frac{\partial}{\partial  u^{t}_{i} } 
\left[    \,  \frac{ \sqrt{ { \mathbf{g}}^{t} \, }  }{ \sqrt{\mathcal{P}} } d^{t}_{i} \; \left\{  \;   \mathbf{J} C^{t}  - \left(D^{t} \cdot \mathbf{J} C^{t} \right)  \; D^{t} \;   \right\} \;  \right]  \\
&=& \; \lambda_{t}{r_{t}}^{2N+1} Q_{t} \; e^{i( \lambda_{t} \Theta )}  \left[ \sum_{i=1}^{2N}    \frac{\partial}{\partial  u^{t}_{i} } 
\left(    \,  \frac{ \sqrt{ { \mathbf{g}}^{t} \, }  }{ \sqrt{\mathcal{P}} } d^{t}_{i}  \right) \; \right] \; \left\{  \;   \mathbf{J} C^{t}  - \left(D^{t} \cdot \mathbf{J} C^{t} \right)  \; D^{t} \;   \right\} \;   \\ 
&& +\, \lambda_{t} {r_{t}}^{2N+1} Q_{t} \; e^{i( \lambda_{t} \Theta )}   \sum_{i=1}^{2N}    \,  \frac{ \sqrt{ { \mathbf{g}}^{t} \, }  }{ \sqrt{\mathcal{P}} } d^{t}_{i} \;  \frac{\partial}{\partial  u^{t}_{i} } 
\left[    \;   \mathbf{J} C^{t}  - \left(D^{t} \cdot \mathbf{J} C^{t} \right)  \; D^{t} \;   \right].  \\
\end{eqnarray*}
According to the identity (\ref{sum03}) deduced in \textbf{Step B}, we notice that the sum in the first term vanishes:
\[
  \sum_{i=1}^{2N}    \frac{\partial}{\partial  u^{t}_{i} } 
\left(    \,  \frac{ \sqrt{ { \mathbf{g}}^{t} \, }  }{ \sqrt{\mathcal{P}} } d^{t}_{i}  \right)
=   {\lambda}_{t}  \sum_{1 \leq i,j \leq 2N}  \frac{\partial}{\partial  u^{t}_{i} } 
\left(    \,  \frac{ \sqrt{ { \mathbf{g}}^{t} \, }  }{ \sqrt{\mathcal{P}} }  {\left( {g}^{t} \right)}^{ij} w^{t}_{j}  \right) =0.
\]
We thus obtain 
\begin{eqnarray*}
  { \mathbf{S} }_{2} 
 &=& \;  \lambda_{t} {r_{t}}^{2N+1} Q_{t} \; e^{i( \lambda_{t} \Theta )}   \sum_{i=1}^{2N}    \,  \frac{ \sqrt{ { \mathbf{g}}^{t} \, }  }{ \sqrt{\mathcal{P}} } d^{t}_{i} \;  \frac{\partial}{\partial  u^{t}_{i} } 
\left[    \;   \mathbf{J} C^{t}  - \left(D^{t} \cdot \mathbf{J} C^{t} \right)  \, D^{t} \;   \right]  \\
 &=&   \; {\lambda_{t} r_{t}}^{2N+1} Q_{t}   \frac{ \sqrt{ { \mathbf{g}}^{t} \, }  }{ \sqrt{\mathcal{P}} }  \; e^{i( \lambda_{t} \Theta )}  \;  \mathbf{J} \; \left(  \sum_{i=1}^{2N}    \,d^{t}_{i} \;  \frac{\partial  C^{t} }{\partial  u^{t}_{i} }  \right) \\
 &&  - \lambda_{t} {r_{t}}^{2N+1} Q_{t}   \left(D^{t} \cdot \mathbf{J} C^{t} \right)   \frac{ \sqrt{ { \mathbf{g}}^{t} \, }  }{ \sqrt{\mathcal{P}} }  \; e^{i( \lambda_{t} \Theta )}   \sum_{i=1}^{2N}    \,d^{t}_{i} \;         \,  \frac{\partial D^{t}}{\partial  u^{t}_{i} }   \;     \\
&&  - \lambda_{t} {r_{t}}^{2N+1} Q_{t}   \frac{ \sqrt{ { \mathbf{g}}^{t} \, }  }{ \sqrt{\mathcal{P}} }  \; e^{i( \lambda_{t} \Theta )} \left[     \sum_{i=1}^{2N}    \,d^{t}_{i} \;  \frac{\partial}{\partial  u^{t}_{i} } 
 \;   \left(D^{t} \cdot \mathbf{J} C^{t} \right)  \,   \right]   \;  D^{t} \\
 &=&   \; {\lambda_{t}}^{2}    {r_{t}}^{2N+1} Q_{t}   \frac{ \sqrt{ { \mathbf{g}}^{t} \, }  }{ \sqrt{\mathcal{P}} }  \; e^{i( \lambda_{t} \Theta )}  \;  \mathbf{J} \; \left(   \sum_{1 \leq i,j \leq 2N}   \,       {\left( {g}^{t} \right)}^{ij} w^{t}_{j}          \;  \frac{\partial  C^{t} }{\partial  u^{t}_{i} }  \right) \\
 &&  -  {\lambda_{t}}^{2}  {r_{t}}^{2N+1} Q_{t}   \left(D^{t} \cdot \mathbf{J} C^{t} \right)   \frac{ \sqrt{ { \mathbf{g}}^{t} \, }  }{ \sqrt{\mathcal{P}} }  \; e^{i( \lambda_{t} \Theta )}   \sum_{1 \leq i,j \leq 2N}   \,       {\left( {g}^{t} \right)}^{ij} w^{t}_{j}           \;         \,  \frac{\partial D^{t}}{\partial  u^{t}_{i} }   \;     \\
&&  -  {\lambda_{t}}^{2}  {\lambda}_{t} {r_{t}}^{2N+1} Q_{t}   \frac{ \sqrt{ { \mathbf{g}}^{t} \, }  }{ \sqrt{\mathcal{P}} }  \; e^{i( \lambda_{t} \Theta )} \left[      \sum_{1 \leq i,j \leq 2N}   \,      {\left( {g}^{t} \right)}^{ij} w^{t}_{j}             \;  \frac{\partial}{\partial  u^{t}_{i} } 
 \;   \left(D^{t} \cdot \mathbf{J} C^{t} \right)  \,   \right]   \;  D^{t}. \\
\end{eqnarray*}

According to the identity \textbf{(d)} of Lemma \ref{magic}, the third sum vanishes. By using two identities in \textbf{(a)} of 
Lemma \ref{magic}, we deduce
\begin{eqnarray*}
  { \mathbf{S} }_{2} 
 &=&     {\lambda_{t}}^{2}   {r_{t}}^{2N+1} Q_{t}   \frac{ \sqrt{ { \mathbf{g}}^{t} \, }  }{ \sqrt{\mathcal{P}} }  \; e^{i( \lambda_{t} \Theta )}  \;  \mathbf{J} \; \left[ \;  \mathbf{J} C^t -  \left( D^t \cdot \mathbf{J} C^t \right) \;  D^t \;  \right] \\
 &&  -   {\lambda_{t}}^{2}  {r_{t}}^{2N+1} Q_{t}   \left(D^{t} \cdot \mathbf{J} C^{t} \right)   \frac{ \sqrt{ { \mathbf{g}}^{t} \, }  }{ \sqrt{\mathcal{P}} }  \; e^{i( \lambda_{t} \Theta )}   \left[ \; 
- \mathbf{J} D^t - \left( D^t \cdot \mathbf{J}  C^t  \right) \;  C^t  \;  \right]      \\
&=&   - {\lambda_{t}}^{2}   {r_{t}}^{2N+1} Q_{t}   \frac{ \sqrt{ { \mathbf{g}}^{t} \, }  }{ \sqrt{\mathcal{P}} }  \;   
\left[ \;  1-  {\left( D^t \cdot \mathbf{J} C^t \right)}^{2} \;  \right]  \; e^{i( \lambda_{t} \Theta )}  C^t. 
\end{eqnarray*}
\textbf{Step C3.} The identity (\ref{sum02})  
and the definition (\ref{Dmeans}) give 
\begin{eqnarray*}
 { \mathbf{S} }_{3} &=& \sum_{s=1}^{L} \sum_{i=1}^{2N}    \frac{\partial}{\partial  u^{s}_{i} }  
\left(    \sqrt{\mathbf{G}} \,  \frac{ \, -  d^{s}_{i}  \, }{ \mathcal{P} }   \frac{\partial}{\partial  \Theta }  \;   \left(  r_{t} e^{i( \lambda_{t} \Theta )}  C^{t}   \right) \;    \right)  \\
 &=&  - {\lambda}_{t} r_{t}  e^{i( \lambda_{t} \Theta )} \left[ \;   \sum_{s=1}^{L}  \sum_{i=1}^{2N}    \frac{\partial}{\partial  u^{s}_{i} }  
\left(  \sqrt{\mathbf{G}} \,  \frac{ \,   d^{s}_{i}  \, }{ \mathcal{P} }  \;  \mathbf{J}  C^{t}    \right)     \; \right]       \\
&=&  - {\lambda}_{s} r_{t}  e^{i( \lambda_{t} \Theta )} \; \mathbf{J} \; \left[ \;   \sum_{s=1}^{L}  \sum_{i=1}^{2N}     \frac{\partial}{\partial  u^{s}_{i} }  
\left(  \sqrt{\mathbf{G}} \,  \frac{ \,   d^{s}_{i}  \, }{ \mathcal{P} }  \;   C^{t}    \right)     \; \right]       \\
 &=&  - {\lambda}_{s} r_{s}  e^{i( \lambda_{t} \Theta )} \; \mathbf{J} \; \left[ \;    \sum_{s=1}^{L}    \; \left\{  \sum_{i=1}^{2N} \frac{\partial}{\partial  u^{s}_{i} }  
\left(  \sqrt{\mathbf{G}} \,  \frac{ \,  d^{s}_{i}  \, }{ \mathcal{P} }   \right)  \; \right\} \;  C^{t}      +   \sum_{i=1}^{2N}  \sum_{s=1}^{L} 
\sqrt{\mathbf{G}} \,  \frac{ \,  d^{s}_{i}  \, }{ \mathcal{P} }     \frac{\partial C^{t} }{\partial  u^{s}_{i} }     \; \right]    \\
&=&    - {\lambda}_{t} r_{t} \frac{ \,  \sqrt{\mathbf{G}}  \, }{ \mathcal{P} }   e^{i( \lambda_{t} \Theta )} \; \mathbf{J} \; \left[ \;  \sum_{i=1}^{2N}   \sum_{s=1}^{L}   d^{s}_{i}   \frac{\partial C^{t} }{\partial  u^{s}_{i} }     \; \right]     \\
&=&   -  {\lambda}_{t} r_{t} \frac{ \,  \sqrt{\mathbf{G}}  \, }{ \mathcal{P} }   e^{i( \lambda_{t} \Theta )} \; \mathbf{J} \; \left[ \;  \sum_{i=1}^{2N}     d^{t}_{i}   \frac{\partial C^{t} }{\partial  u^{t}_{i} }     \; \right]     \\
&=&  -  { {\lambda}_{t}}^{2} r_{t} \frac{ \,  \sqrt{\mathbf{G}}  \, }{ \mathcal{P} }   e^{i( \lambda_{t} \Theta )} \; \mathbf{J} \; \left[ \;     \sum_{1 \leq i,j \leq 2N}      {\left( {g}^{t} \right)}^{ij} w^{t}_{j}  \frac{\partial C^{t} }{\partial  u^{t}_{i} }     \; \right]. 
\end{eqnarray*}
The first identity in \textbf{(a)} of Lemma \ref{magic} and the factorization (\ref{FAC}) yield
\begin{eqnarray*}
 { \mathbf{S} }_{3} &=& -  { {\lambda}_{t}}^{2} r_{t} \frac{ \,  \sqrt{\mathbf{G}}  \, }{ \mathcal{P} }   e^{i( \lambda_{t} \Theta )} \; \mathbf{J} \; 
\left[ \;   \mathbf{J} C^{t}  - \left(D^{t} \cdot \mathbf{J} C^{t} \right) \; D^{t}    \; \right]  \\
&=&    { {\lambda}_{t}}^{2} r_{t} \frac{ \,  \sqrt{\mathbf{G}}  \, }{ \mathcal{P} }   e^{i( \lambda_{t} \Theta )} \; 
\left[ \;    C^{t}  + \left(D^{t} \cdot \mathbf{J} C^{t} \right) \;  \mathbf{J} D^{t}    \; \right] \\
 &=&  { {\lambda}_{t}}^{2} {r_{t}}^{2N+1} Q_{t}  \frac{ \, \sqrt{ { \mathbf{g}}^{t} \, }    \, }{ \sqrt{ \mathcal{P} }  }   e^{i( \lambda_{t} \Theta )} \;  \left[ \;    C^{t}  + \left(D^{t} \cdot \mathbf{J} C^{t} \right) \;  \mathbf{J} D^{t}    \; \right].
\end{eqnarray*}
\textbf{Step C4.} We simplify the sum  $ { \mathbf{S} }_{4}$. 
\begin{eqnarray*}
 { \mathbf{S} }_{4}
&=&     \sum_{s=1}^{L} \sum_{i=1}^{2N}    \frac{\partial}{\partial  \Theta }
\left(    \sqrt{\mathbf{G}} \,  \frac{ \, -  d^{s}_{i}  \, }{ \mathcal{P} }    \frac{\partial}{\partial  u^{s}_{i} }  \;   \left(  r_{t} e^{i( \lambda_{t} \Theta )}  C^{t}   \right) \;   \right)  \\
 &=& - \frac{ \,   \sqrt{\mathbf{G}}   \, }{ \mathcal{P} } \sum_{i=1}^{2N}    \frac{\partial}{\partial  \Theta }
\left(   \,   \sum_{s=1}^{L}     d^{s}_{i}  \frac{\partial}{\partial  u^{s}_{i} }  \;   \left(  r_{t} e^{i( \lambda_{t} \Theta )}  C^{t}   \right) \;   \right)  \\
 &=&  - \frac{ \,   \sqrt{\mathbf{G}}   \, }{ \mathcal{P} } \sum_{i=1}^{2N}    \frac{\partial}{\partial  \Theta } 
\left( \;\;\; \quad  d^{t}_{i}  \frac{\partial}{\partial  u^{t}_{i} }  \;   \left(  r_{t} e^{i( \lambda_{t} \Theta )}  C^{t}   \right) \;   \right)     \\
&=&  -   r_{t}  \frac{ \,   \sqrt{\mathbf{G}}   \, }{ \mathcal{P} } \sum_{i=1}^{2N}  d^{t}_{i}
  \frac{\partial}{\partial  \Theta } 
\left(    \frac{\partial   }{\partial  u^{t}_{i} }  \;   e^{i( \lambda_{t} \Theta )}  C^{t} \;   \right)     \\
&=&  -  {\lambda}_{t} r_{t}  \frac{ \,   \sqrt{\mathbf{G}}   \, }{ \mathcal{P} } e^{i( \lambda_{t} \Theta )}  \sum_{i=1}^{2N}  d^{t}_{i}
\left(    \frac{\partial   }{\partial  u^{t}_{i} }  \;    \mathbf{J} C^{t} \;   \right)     \\
&=&  -  {\lambda}_{t} r_{t}  \frac{ \,   \sqrt{\mathbf{G}}   \, }{ \mathcal{P} } e^{i( \lambda_{t} \Theta )}   \mathbf{J} \; \left( \; \sum_{i=1}^{2N}  d^{t}_{i}
        \frac{\partial    C^{t}  }{\partial  u^{t}_{i} }    \; \right).  
\end{eqnarray*}
From the definition $d^{t}_{i} = \sum_{j=1}^{2N} {\lambda}_{t}  {\left( {g}^{t} \right)}^{ij} w^{t}_{j}$ and the first identity in \textbf{(a)} of Lemma \ref{magic}, we deduce
\begin{eqnarray*}
 { \mathbf{S} }_{4}
&=&  - {{\lambda}_{t}}^{2} r_{t}  \frac{ \,   \sqrt{\mathbf{G}}   \, }{ \mathcal{P} } e^{i( \lambda_{t} \Theta )}  \mathbf{J} \; \left( \;   \sum_{1 \leq i,j \leq 2N}   {\left( {g}^{t} \right)}^{ij} w^{t}_{j}      \frac{\partial    C^{t}  }{\partial  u^{t}_{i} }    \; \right)     \\
&=&  - {{\lambda}_{t}}^{2} r_{t}  \frac{ \,   \sqrt{\mathbf{G}}   \, }{ \mathcal{P} } e^{i( \lambda_{t} \Theta )}  \mathbf{J} \; \left( \;   \mathbf{J} C^{t}  - \left(D^{t} \cdot \mathbf{J} C^{t} \right) \; D^{t}    \; \right)     \\
&=&   { {\lambda}_{t}}^{2} r_{t} \frac{ \,  \sqrt{\mathbf{G}}  \, }{ \mathcal{P} }   e^{i( \lambda_{t} \Theta )} \; 
\left[ \;    C^{t}  + \left(D^{t} \cdot \mathbf{J} C^{t} \right) \;  \mathbf{J} D^{t}    \; \right]. 
\end{eqnarray*}
By using the factorization (\ref{FAC}) obtained in \textbf{Step C1}: 
\[
\sqrt{\mathbf{G}} = \sqrt{ \mathcal{P}} \; {r_{t}}^{2N} \sqrt{ { \mathbf{g}}^{t} \, } Q_{t},
\]
we have
\[
 { \mathbf{S} }_{4} =  { {\lambda}_{t}}^{2} {r_{t}}^{2N+1} Q_{t}  \frac{ \, \sqrt{ { \mathbf{g}}^{t} \, }    \, }{ \sqrt{ \mathcal{P} }  }   e^{i( \lambda_{t} \Theta )} \;  \left[ \;    C^{t}  + \left(D^{t} \cdot \mathbf{J} C^{t} \right) \;  \mathbf{J} D^{t}    \; \right].
\]
\textbf{Step C5.} The term  $ { \mathbf{S} }_{5}$ can be simplified to:  
\[
 { \mathbf{S} }_{5}  =  {{\lambda}_{t}}  r_{t}  \frac{\partial}{\partial  \Theta } \;    \left(   \frac{ \,  \sqrt{\mathbf{G}}  \, }{ \mathcal{P} }   e^{i( \lambda_{t} \Theta )}  \; \mathbf{J}  C^{t}    \;  \right)   = -  {{\lambda}_{t}}^{2} r_{t} \frac{ \,  \sqrt{\mathbf{G}}  \, }{ \mathcal{P} }       e^{i( \lambda_{t} \Theta )}  \;   C^{t}.    
\]
From the factorization (\ref{FAC}) obtained in \textbf{Step C1}: 
\[
\sqrt{\mathbf{G}} = \sqrt{ \mathcal{P}} \; {r_{t}}^{2N} \sqrt{ { \mathbf{g}}^{t} \, } Q_{t},
\]
we have
\[
 { \mathbf{S} }_{5}  =   -  {{\lambda}_{t}}^{2}  {r_{t}}^{2N+1} Q_{t}  \frac{ \,  \sqrt{ { \mathbf{g}}^{t} \, }  \, }{\sqrt{ \mathcal{P}} }       e^{i( \lambda_{t} \Theta )}  \;   C^{t}.    
\]
\textbf{Step C6.} We have 
\begin{eqnarray*}
 { \mathbf{S} }_{6} &=&   \sum_{s=1}^{L}      \frac{\partial}{\partial  r^{s} }  \left( \sqrt{\mathbf{G}} 
  \frac{\partial}{\partial  r^{s} } \;   \left(  r_{t} e^{i( \lambda_{t} \Theta )}  C^{t}   \right) \;    \right).  \\
 &=& \;\;\, \quad   \frac{\partial}{\partial  r^{t} }  \left( \sqrt{\mathbf{G}} 
  \frac{\partial}{\partial  r^{t} } \;   \left(  r_{t} e^{i( \lambda_{t} \Theta )}  C^{t}   \right) \;    \right).  \\
&=&  e^{i( \lambda_{t} \Theta )}  \;  \frac{\partial  \sqrt{\mathbf{G}} }{\partial  r^{t} }   \;     C^{t}    \;     \\
&=&   \sqrt{ { \mathbf{g}}^{t} \, } Q_{t} \; e^{i( \lambda_{t} \Theta )}  \;  \left[ 2N {r_{t}}^{2N-1}  \sqrt{ \mathcal{P}}    + {r_{t}}^{2N} \frac{\partial  \sqrt{\mathcal{P}} }{\partial  r^{t} }  \right]  \;     C^{t}. 
\end{eqnarray*}
By using  (\ref{Rmeans}), (\ref{Pmeans}), and the first identity in \textbf{(b)} of Lemma \ref{magic}, we deduce  
\begin{eqnarray*}
  \frac{\partial  \sqrt{\mathcal{P}} }{\partial  r_{t} } 
&=&  \frac{1}{2   \sqrt{\mathcal{P}}  }  \frac{\partial }{\partial  r_{t} }  \left(    \mathcal{R} - \sum_{1 \leq s \leq L} \sum_{1 \leq i,j \leq 2N}   {  {\lambda}_{s} }^{2} {r_{s}}^{2}   {\left( {g}^{s} \right)}^{ij} w^{s}_{i} w^{s}_{j}   \right)  \\ 
&=&   \frac{1}{2   \sqrt{\mathcal{P}}  }    \left[  \;  2   {  {\lambda}_{t} }^{2}  r_{t}  -   2   {  {\lambda}_{t} }^{2}  r_{t}  
\sum_{1 \leq i,j \leq 2N}    {\left( {g}^{t} \right)}^{ij} w^{t}_{i} w^{t}_{j}  \;  \right]  \\ 
&=&   \frac{1}{2   \sqrt{\mathcal{P}}  }    \left[  \;  2   {  {\lambda}_{t} }^{2}  r_{t}  -   2   {  {\lambda}_{t} }^{2}  r_{t}  
 \; \left(     1 - \left(  {  D^t \cdot \mathbf{J}  C^t    }   \right)^{2}         \right) \;  \right] \\ 
&=&      \frac{  {  {\lambda}_{t} }^{2}  r_{t}   }{   \sqrt{\mathcal{P}}  }    \left(  {  D^t \cdot \mathbf{J}  C^t    }   \right)^{2}.     
\end{eqnarray*}
and meet 
\begin{eqnarray*}
 { \mathbf{S} }_{6} 
&=&  \; \; \;  2N \,  {r_{t}}^{2N-1}  Q_{t} \sqrt{ \mathcal{P}}  \sqrt{ { \mathbf{g}}^{t} \, }   \; e^{i( \lambda_{t} \Theta )}    C^{t}     \\
&&   +   {  {\lambda}_{t} }^{2}    {r_{t}}^{2N+1}  Q_{t}    \frac{ \sqrt{ { \mathbf{g}}^{t} \, }    }{   \sqrt{\mathcal{P}}  }        \left(  {  D^t \cdot \mathbf{J}  C^t    }   \right)^{2}    \; e^{i( \lambda_{t} \Theta )}  \;          C^{t}.    \\
\end{eqnarray*}
\textbf{Step C7.} Combining the results so far, we conclude
\begin{eqnarray*}
 && \; \; \; { \mathbf{S} }_{1}  +{ \mathbf{S} }_{2}  +{ \mathbf{S} }_{3}  +{ \mathbf{S} }_{4}  +{ \mathbf{S} }_{5}  +{ \mathbf{S} }_{6}  \\  
&=&    -  \; 2N \,  {r_{t}}^{2N-1}  Q_{t} \sqrt{ \mathcal{P}}  \sqrt{ { \mathbf{g}}^{t} \, }   \; e^{i( \lambda_{t} \Theta )}    C^{t}       \\  
 && - \; 2 {  {\lambda}_{t} }^{2}   {r_{t}}^{2N+1}  Q_{t}  \frac{  \sqrt{ { \mathbf{g}}^{t} \, }  }{  \sqrt{   \mathcal{P}   }}    \left(    D^t \cdot \mathbf{J}  C^t      \right)  \; e^{i( \lambda_{t} \Theta )}   \left[     \mathbf{J} D^t +  \left( D^t \cdot \mathbf{J} C^t \right) \, C^t \,    \right] \\
&&   - \;  {\lambda_{t}}^{2}   {r_{t}}^{2N+1} Q_{t}   \frac{ \sqrt{ { \mathbf{g}}^{t} \, }  }{ \sqrt{\mathcal{P}} }  \;   
\left[ \;  1-  {\left( D^t \cdot \mathbf{J} C^t \right)}^{2} \;  \right]  \; e^{i( \lambda_{t} \Theta )}  C^t \\
&& +  \; 2 \,  { {\lambda}_{t}}^{2} {r_{t}}^{2N+1} Q_{t} \frac{ \, \sqrt{ { \mathbf{g}}^{t} \, }    \, }{ \sqrt{ \mathcal{P} }  }    e^{i( \lambda_{t} \Theta )} \;  \left[ \;    C^{t}  + \left(D^{t} \cdot \mathbf{J} C^{t} \right) \;  \mathbf{J} D^{t}    \; \right]    \\  
&&   - \;  {{\lambda}_{t}}^{2}  {r_{t}}^{2N+1} Q_{t}  \frac{ \,  \sqrt{ { \mathbf{g}}^{t} \, }  \, }{\sqrt{ \mathcal{P}} }       e^{i( \lambda_{t} \Theta )}  \;   C^{t}        \\  
&& + \; 2N \,  {r_{t}}^{2N-1}  Q_{t} \sqrt{ \mathcal{P}}  \sqrt{ { \mathbf{g}}^{t} \, }   \; e^{i( \lambda_{t} \Theta )}    C^{t}     \\
&&   + \;  {  {\lambda}_{t} }^{2}    {r_{t}}^{2N+1}  Q_{t}    \frac{ \sqrt{ { \mathbf{g}}^{t} \, }    }{   \sqrt{\mathcal{P}}  }        \left(  {  D^t \cdot \mathbf{J}  C^t    }   \right)^{2}    \; e^{i( \lambda_{t} \Theta )}  \;          C^{t}    \\
&=& 0.      
\end{eqnarray*}


\end{document}